\documentclass[twoside,11pt]{amsart}
\usepackage{amsmath,latexsym,amssymb, times, enumerate, hyperref}

\usepackage{verbatim}

\setlength{\topmargin}{-.1in} \setlength{\oddsidemargin}{0.3in}
\setlength{\evensidemargin}{0.3in} \setlength{\textheight}{8.5in}
\setlength{\textwidth}{6in}

\usepackage{graphicx,supertabular,paralist}
\usepackage[dvips,all]{xy}
\usepackage{tikz}
\usetikzlibrary{matrix}
\usetikzlibrary{arrows, patterns}
\tikzstyle{ghostfill} = [fill=white]
         \tikzstyle{ghostdraw} = [draw=black!50]
\usetikzlibrary{arrows,shapes,positioning}
\usetikzlibrary{decorations.markings}
\tikzstyle arrowstyle=[scale=1]
\tikzstyle directed=[postaction={decorate,decoration={markings,
    mark=at position .65 with {\arrow[arrowstyle]{stealth}}}}]
\tikzstyle reverse directed=[postaction={decorate,decoration={markings,
    mark=at position .65 with {\arrowreversed[arrowstyle]{stealth};}}}]

    \makeatletter
\newtheorem*{rep@theorem}{\rep@title}
\newcommand{\newreptheorem}[2]{%
\newenvironment{rep#1}[1]{%
 \def\rep@title{#2 \ref{##1}}%
 \begin{rep@theorem}}%
 {\end{rep@theorem}}}

\input amssym.def
\input amssym
\input xypic
\input xy
\xyoption{all}
\setlength{\topmargin}{-.1in} \setlength{\oddsidemargin}{0.3in}
\setlength{\evensidemargin}{0.3in} \setlength{\textheight}{8.5in}
\setlength{\textwidth}{6in}

\def\sqr#1#2{{\vcenter{\hrule height.#2pt
        \hbox{\vrule width.#2pt height#1pt \kern#1pt
                \vrule width.#2pt}
        \hrule height.#2pt}}}

\def\Z{{\mathbb Z}}

\def \p {{\mathfrak{p}}}

\def \lcm{\operatorname{lcm}}

\newcommand\height{\operatorname{ht}}

\def \dim{\operatorname{dim}}

\newcommand{\im}{\operatorname{im}}

\newcommand{\reg}{\operatorname{reg}_R}

\newcommand{\rank}{\operatorname{rank}}

\def\p{{\mathfrak p}}

\newtheorem{Theorem}{Theorem}[section]

\newtheorem{Lemma}[Theorem]{Lemma}
\newtheorem{Corollary}[Theorem]{Corollary}
\newtheorem{Proposition}[Theorem]{Proposition}

\newtheorem{Assumptions and Discussion}[Theorem]{Assumptions and Discussion}

\newtheorem{Example}[Theorem]{Example}
\newtheorem{Definition}[Theorem]{Definition}

\newcommand{\pd}{\mathop{\mathrm{pd}}\nolimits}

\begin{document}

\baselineskip=16pt

\baselineskip=16pt

\title[LCM Dual]
{\Large\bf LCM Duals of Monomial Ideals}

\author[Katie Ansaldi, and Kuei-Nuan Lin]
{Katie Ansaldi and Kuei-Nuan Lin}

\thanks{AMS 2010 {\em Mathematics Subject Classification}.
Primary 13D02; Secondary 05E40.}

\thanks{Keyword: Free Resolution,  Special Fiber, Monomial Ideal,  Ferrers Graph, Strongly Stable Ideal
}

\address{
Penn State Greater Allegheny, Academic Affairs,  McKeesport, PA
}
\email{kul20@psu.edu}

\address{
Kalamazoo College, Department of Mathematics and Computer Science, Kalamazoo, MI
} \email{kansaldi@gmail.com}

\

\date{\today}

\vspace{-0.1in}

\begin{abstract}
Given a monomial ideal in a polynomial ring over a field, we define the LCM-dual of the given ideal. We show good properties of LCM-duals.  Including the isomorphism between the special fiber of LCM-dual and the special fiber of given monomial ideal. We show the special fibers of LCM-duals of strongly stable ideals are normal Cohen-Macaulay Koszul domains. We provide an explicit describing of minimal free resolutions of LCM-duals of strongly stable ideals.
\end{abstract}

\maketitle

\section{Introduction}
Given a polynomial ring $R=K[x_1,\ldots,x_n]$ over a field $K$ and an ideal $I$ in $R$, one would like to understand algebraic properties of the ideal such as Castelnuovo-Mumford regularity of the ideal, the projective dimension of the ideal, and the Cohen-Macaulayness. Finding the minimal free resolution of the ideal is the key to those properties. This has been an active area among commutative algebraists and algebraic geometers. When $I$ is a monomial ideal, one can associate to $I$ a combinatorial object such as graph or hypergraph and use combinatorial properties to recover algebraic properties, see for example the surveys \cite{H}, \cite{MV}. However, describing the precise minimal free resolution of a squarefree monomial ideal is not easy, see for example \cite{AHH}, \cite{Ho}, \cite{HV}. There are even fewer results on finding the minimal free resolution for non-squrefree monomial ideals. The first class of non-squarefree ideals to consider is that of strongly stable ideals, which is studied by Eliahou and Kervaire in \cite{EK}. 

The motivation of this work comes from the work of Corso and Nagel  in \cite{CN1}, where they study the specialization of a generalized Ferrers graph (see Definition \ref{Spec} and \ref{genferdef}). They show every strongly stable ideal that is generated in degree two can be obtained via a specialization. The authors later explicitly describe the minimal free resolution of every Ferrers ideal in \cite{CN2}. They use cellular resolutions as introduced by Bayer and Sturmfels in \cite{BS}. In this work, we define an operation on monomial ideals known as the LCM-dual (see Definition \ref{dual}). The LCM-dual is generated in the same degree if the given ideal is generated in the same degree. Moreover LCM-duals are in general not generated in degree 2 and need not be squarefree. We first describe the basic properties of such ideals.The class of LCM-duals of ideals generated in the same degree has some nice properties such as closure under the ideal product and the double LCM-dual of an ideal is itself (Lemma \ref{dualthm}, and  Proposition \ref{close}). When the height of a monomial ideal $I$ is at least 2, the special fiber ring of $I$ is isomorphic to the special fiber ring of the LCM-dual of $I$ (Theorem \ref{FiberIso}). 

We then focus on different properties of the LCM-duals for different classes of monomial ideals that are related to classical Ferrers graphs.  In Section 3, we show that the LCM-dual of a Ferrers ideal is the Alexander dual of the edge ideal of the complement of the Ferrers graph (Theorem \ref{AlexDual}). Along the way, we also determine the irredundant primary decomposition of  the LCM-dual of a Ferrers ideal. The second class of ideals we consider are specializations of generalized Ferrers ideals, the strongly stable ideals of degree two. As a corollary of Theorem \ref{FiberIso} and the work of Corso, Nagel, Petrovi\'{c}, and Yuen in \cite{CNPY}, we describe the special fiber rings of duals of strongly stable ideals in degree two. In other words, we describe the toric rings associated to the LCM-duals (Corollary \ref{specfibershape}). Those toric rings are normal Cohen-Macaulay domains that are Koszul, i.e. we describe a new class of Koszul ideals.

 In Section 4, we find a cellular complex which supports the minimal free resolution for the duals of these strongly stable ideals. We describe minimal free resolutions of LCM-duals of strongly stable ideals generated in degree two and recover the Betti numbers, Castelnuovo-Mumford regularity and projective dimension of such ideals (Theorem \ref{mfr}, Corollary \ref{reg}, and Proposition \ref{betti}). The construction is inspired by the works of Corso and Nagel \cite{CN1},\cite{CN2} and we use classical directed graph theory for the proof. Surprisingly, such ideals have projective dimension 3 and have linear free resolutions.
 
\section{Preliminary}

Let $R = K[x_1,\ldots,x_n]$ be a polynomial ring over a field $K$.  We give $R$ a standard graded structure, where all variables have degree one.  We write $R_i$ for the $K$-vector space of homogenous degree $i$ forms in $R$ so that $R = \bigoplus_{i \ge 0} R_i$.  We use the notation $R(-d)$ to denote a rank-one free module with generator in degree $d$ so that $R(-d)_i = R_{i-d}$.  

Let $M$ be a finitely generated graded $R$-module.  We can compute the minimal graded free resolution of $M$: \[ 0 \leftarrow \bigoplus_{j} R(-j)^{\beta_{0j}(M)} \leftarrow \bigoplus_{j} R(-j)^{\beta_{1j}(M)} \leftarrow \bigoplus_{j} R(-j)^{\beta_{2j}(M)} \cdots \leftarrow \bigoplus_{j} R(-j)^{\beta_{pj}(M)}\leftarrow 0.\]
  The minimal graded free resolution of $M$ is unique up to isomorphism.  Hence, the numbers $\beta_{ij}(M)$, called the \textit{graded Betti numbers} of $M$, are invariants of $M$.  Two coarser invariants measuring the complexity of this resolution are the \textit{projective dimension} of $M$, denoted $\pd(M)$, and the \textit{Castelnuovo-Mumford regularity} of $M$, denoted $\reg(M)$.  These can be defined as
\[\pd(M) = \max\{i\,:\,\beta_{ij}(M) \neq 0 \text{ for some } j\} \] and \[ \reg(M) = \max\{j - i\,:\,\beta_{ij}(M) \neq 0 \text{ for some } i\}.\]  We define the LCM-dual and prove some basic facts about the LCM-dual which we will use later. We write $m_I=\lcm(I)=\lcm\{f_1, \ldots, f_\nu\}$, the least common multiple of a monomial ideal $I$, minimally generated by the monomials $\{f_1, \ldots, f_\nu\}$.  Notice that the least common multiple of a monomial ideal is well-defined because a monomial ideal minimally generated by the monomials $\{f_1, \ldots, f_\nu\}$ has a unique set of monomial minimal generators. Motivated by this definition, we define a dual on monomial ideals. 

\begin{Definition}\label{dual}
For $I \subseteq R$, a monomial ideal minimally generated by the monomials $\{f_1, \ldots, f_\nu\}$, the \emph{LCM-dual of } $I$ as the ideal $\widehat I$ generated 
by the set of monomials $\left \{\widehat {f_i} = m_I/f_i \right \}$. 
\end{Definition}

We illustrate the concept of the LCM-dual with an example. 
\begin{Example}
Consider the ideal $I  =(x^3, x^2y^2, y^4) \subseteq K[x,y]$. The least common multiple of $I$ is $m_I = x^3y^4$. The LCM-dual of $I$ is 
$$\widehat I = (x^3, xy^2, y^4).$$

\end{Example}

We prove some elementary properties of the LCM-dual, first showing it is a dual under a mild condition on the ideal $I$. 

\begin{Lemma} \label{dualthm}
Let $I \subseteq R$ be a monomial ideal in $R$ with $\height I \geq 2$ and let $\widehat I$ be the LCM-dual of $I$. The LCM-dual of $\widehat I$ is $I$. That is, $\widehat{\widehat{I}} = I$.  
\end{Lemma}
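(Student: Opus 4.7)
The plan is to unpack the definition of the LCM-dual twice, which reduces the claim to showing that the least common multiple of the generators of $\widehat{I}$ equals $m_I$. Concretely, write $m_I = \prod_{j=1}^{n} x_j^{a_j}$ and $f_i = \prod_{j=1}^{n} x_j^{b_{ij}}$, so that $\widehat{f_i} = m_I/f_i = \prod_{j=1}^{n} x_j^{a_j - b_{ij}}$. Computing double duals in these coordinates, one has
\[
m_{\widehat{I}} = \lcm_{i}\bigl(m_I/f_i\bigr) = \prod_{j=1}^{n} x_j^{\max_i (a_j - b_{ij})} = \prod_{j=1}^{n} x_j^{a_j - \min_i b_{ij}},
\]
and then $\widehat{\widehat{f_i}} = m_{\widehat{I}}/\widehat{f_i}$. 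Provided $m_{\widehat{I}} = m_I$, this last expression collapses to $f_i$ and the lemma follows.

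First I would check that $\{\widehat{f_i}\}$ is actually a minimal generating set of $\widehat{I}$, so that the double dual is really being computed from $\nu$ generators. For $i \neq j$, $\widehat{f_j} \mid \widehat{f_i}$ iff $f_i \mid f_j$, and by the minimality of $\{f_1,\ldots,f_\nu\}$ as generators of $I$ this cannot happen. Hence $\{\widehat{f_i}\}$ is irredundant and therefore is the unique minimal monomial generating set of $\widehat{I}$.

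The main step is showing $m_{\widehat{I}} = m_I$, i.e. that $\min_i b_{ij} = 0$ for every $j$. This is equivalent to saying that for each variable $x_j$ some minimal generator $f_i$ is not divisible by $x_j$, i.e. $I \not\subseteq (x_j)$. Here is where the hypothesis $\height I \geq 2$ enters: the minimal primes of a monomial ideal are generated by subsets of the variables, so any height-one minimal prime over $I$ would be of the form $(x_j)$ and would force $I \subseteq (x_j)$. Since $\height I \geq 2$ rules this out for every $j$, we conclude $\min_i b_{ij} = 0$ for each $j$, hence $m_{\widehat{I}} = m_I$.

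Combining the two steps, $\widehat{\widehat{f_i}} = m_I/(m_I/f_i) = f_i$, so $\widehat{\widehat{I}}$ and $I$ have the same (minimal) monomial generating set, and therefore $\widehat{\widehat{I}} = I$. The only subtle point is the translation of $\height I \geq 2$ into the combinatorial statement that every variable is missing from at least one generator; the rest is bookkeeping with exponents.
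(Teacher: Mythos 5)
Your proof is correct and takes essentially the same approach as the paper: both arguments reduce the claim to showing $m_{\widehat{I}} = m_I$, deduce this from $\height I \geq 2$ via $I \not\subseteq (x_j)$ for every variable $x_j$ (so each variable is absent from some minimal generator), and then conclude $\widehat{\widehat{f_i}} = m_I/(m_I/f_i) = f_i$. Your explicit verification that $\{\widehat{f_i}\}$ is the minimal monomial generating set of $\widehat{I}$ (via $\widehat{f_j} \mid \widehat{f_i} \iff f_i \mid f_j$) is used only implicitly in the paper, and is a correct and worthwhile addition.
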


\begin{proof}
Write $I = (f_1, \ldots, f_\nu)$, where $f_j$ is a minimal monomial generator of $I$.  We first claim that $m_{\widehat I} = m_I$.  Write $m_I = x_1^{d_1} x_2^{d_2} \cdots x_n^{d_n}$. The height assumption gives $I \not \subseteq (x_i)$ for any $i$, hence there exists a minimal monomial generator $f_j \in I$ such that $x_i$ does not divide $f_j$. Then $x_i^{d_i}$ divides $\widehat{f_j} = m_I/f_j$ so $x_i^{d_i}$ divides $m_{\widehat{I}}$, so  $m_I| m_{\widehat{I}}$. To see that $m_{\widehat I} | m_I$, note that $m_I = f_i\widehat{f_i}$ for all $i$. Thus $m_I$ is a common multiple of the $\widehat {f_i}'s$, so $m_{\widehat I} $ divides $m_I$ which establish the claim. We have $\widehat{\widehat{f_i}} = m_{\widehat{I}}/\widehat{f_i} =  m_I/(m_I/(f_i)) = f_i$, so $\widehat{\widehat{I}} = I$. 
\end{proof}

The height condition above is necessary; indeed, for arbitrary monomial ideals, we may not have that $\widehat{\widehat{I}} = I$. For instant, let $I = (x_1^2, x_1x_2, x_1x_3) \subseteq K[x_1, x_2, x_3]$. The least common multiple of $I$ is $m_I= x_1^2x_2x_3$, so $\widehat I = (x_2 x_3,x_1x_3,x_1x_2)$, but $\widehat{\widehat{I}} = (x_1, x_2, x_3) \neq I$.

Next we prove that the duality is closed under the product of ideals for ideals generated in the same degree. Given a monomial $m$ in $R=K[x_1,\ldots,x_n]$, we write $\deg_i m$, the degree of $x_i$ in $m$.

\begin{Proposition}\label{close}
Let $I$ and $J$ be two monomial ideals generated in the same degree in $R = K[x_1, \ldots, x_n]$. Then  $\widehat{IJ} = \widehat{I}\widehat{J}$. 
\end{Proposition}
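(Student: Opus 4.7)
The plan is to verify that both ideals have the same monomial generating set. Write $I = (f_1, \ldots, f_\nu)$ and $J = (g_1, \ldots, g_\mu)$, where $\deg f_i = d_I$ for all $i$ and $\deg g_j = d_J$ for all $j$. Then $IJ$ is generated by the products $\{f_i g_j\}$, all of which lie in degree $d_I + d_J$. The first step is to observe that, because every $f_i g_j$ has the same total degree, any divisibility relation $f_i g_j \mid f_{i'} g_{j'}$ among them forces equality of monomials. Therefore, after removing duplicates, the family $\{f_i g_j\}$ is precisely the minimal monomial generating set of $IJ$.

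Next, I would establish the identity $m_{IJ} = m_I \cdot m_J$ variable by variable: for each $x_k$,
\[
\deg_k m_{IJ} = \max_{i,j}\bigl(\deg_k f_i + \deg_k g_j\bigr) = \max_i \deg_k f_i + \max_j \deg_k g_j = \deg_k m_I + \deg_k m_J.
\]
With this in hand, a direct computation shows that for every pair $(i,j)$,
\[
\widehat{f_i g_j} \;=\; \frac{m_{IJ}}{f_i g_j} \;=\; \frac{m_I}{f_i}\cdot\frac{m_J}{g_j} \;=\; \widehat{f_i}\,\widehat{g_j}.
\]

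These two steps combine immediately. Since the minimal generators of $IJ$ are the distinct $f_i g_j$, Definition \ref{dual} tells us $\widehat{IJ}$ is generated by the monomials $\widehat{f_i g_j} = \widehat{f_i}\widehat{g_j}$, which is exactly the generating set of the product $\widehat{I}\widehat{J}$. Hence the two ideals coincide. The main subtlety is in the first step: the equal-degree hypothesis is precisely what allows us to identify $\{f_i g_j\}$ with the minimal generating set of $IJ$. Without it, a product $f_i g_j$ can be properly divisible by some $f_{i'}g_{j'}$, creating redundant elements in the generating family, and one can check with small examples (e.g.\ $I=(x^2,y)$, $J=(x,y^2)$) that the conclusion genuinely fails in that generality. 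So beyond this careful bookkeeping of minimal generators, the proof is a short formal calculation.
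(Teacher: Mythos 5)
Your proof is correct and follows essentially the same route as the paper: identify the distinct products $f_i g_j$ as the minimal generators of $IJ$ using the equal-degree hypothesis, prove $m_{IJ}=m_I m_J$ variable by variable, and then compute $\widehat{f_i g_j}=\widehat{f_i}\,\widehat{g_j}$. Your explicit remark that the max over pairs decouples into a sum of maxima, and your counterexample discussion for unequal degrees, are slightly more careful than the paper's wording but the argument is the same.
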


\begin{proof}
Let $\{f_1, \ldots, f_s\}$ be the minimal monomial generating set for $I$ and $\{g_1, \ldots, g_t\}$ be the minimal generating set of $J$. Let the degree of the $f_{j}'s$ be $\delta_I$ and the degree of the $g_k's$ be $\delta_J$. 

First we show that $m_I \cdot m_J = m_{IJ}$. We have that 
$$m_I = \lcm(I) = x_1^{d_1} \cdots x_n^{d_n},$$ where $d_i = \displaystyle \max_{1 \leq j \leq s} \{ \deg_i(f_j)\}$.  Similarly, 
$$m_J=\lcm(J) = x_1^{d_1'} \cdots x_n^{d_n'},$$ where $d'_i = \displaystyle \max_{1 \leq k \leq t} \{\deg_i(g_k)\}$.  The product ideal $IJ$ is generated by $\{f_jg_k\}$. Note that $f_jg_k$ are of degree $\delta_I \delta_J$. Thus if $f_jg_k | f_pg_q$, we have $f_jg_k = f_pg_q$ and so the set $\{f_jg_k\}$ is a minimal generating set of $IJ$. After relabeling, suppose $h_1, \ldots, h_r$ are distinct monomial minimal generators of $IJ$. We have
$$m_{IJ} = \lcm(IJ) = x_1^{e_1} \cdots x_n^{e_n},$$
where $e_i = \displaystyle \max_{1 \leq l \leq r} \{\deg_i(h_l)\}$. Since each $h_l = f_jg_k$ for some $j$ and $k$, we have $e_i= d_i + d'_i$. Hence we 
$$m_{IJ} = x_1^{e_1} \cdots x_n^{e_n} = x_1^{d_1+d_1'} \cdots x_n^{d_n + d_n'} = m_I m_J.$$

Now we prove that the LCM-dual of the product ideal $IJ$ is the product of the LCM-duals of $I$ and $J$. Note that for monomials $f_j \in I$, $g_k \in J$, we have
$$\widehat{f_j g_k} = m_{IJ}/(f_j g_k) = (m_I/f_j)(m_J/g_k) = \widehat {f_j} \widehat{g_k}.$$
Thus the minimal monomial generators of $\widehat{IJ}$ are the products of the minimal monomial generators of $\widehat{I}$ and $\widehat{J}$. 
\end{proof}

For ideals not generated in the same degree, the operation may not preserve products. For example, $I = (x^3, xy, y^2)$, an ideal in $R = K[x,y]$, and $I^2 = (x^6, x^4y, x^2y^2, xy^3, y^4).$ We have $\widehat{I} = (x^3, x^2y, y^2)$, and so 
$$(\widehat{I})^2 = (x^6, x^5y, x^3y^2, x^2y^3, y^4) \supsetneq \widehat{(I^2)} = (x^6, x^2y^3, x^4y^2, x^5y, y^4)$$
since $x^3y^2 \not \in \widehat{(I^2)}$. 

Recall that the special fiber ring of $I$ is the ring $\mathcal F(I) = K[f_1t, \ldots, f_{\nu} t]$ where $t$ is a new variable and $I$ is minimally generated by $f_1,\ldots,f_{\nu}$. Geometrically,  the special fiber ring $\mathcal F(I)$ is the homogeneous coordinate ring of the image of a map $\mathbb{P}^{n-1} \rightarrow \mathbb{P}^{{\nu}-1}$. There is a natural map $\phi: K[T_1, \ldots, T_{\nu}] \rightarrow K[f_1t, \ldots, f_{\nu}t]$. We have a short exact sequence
$$0 \rightarrow J \rightarrow K[T_1, \ldots, T_{\nu}] \rightarrow K[f_1t, \ldots, f_{\nu}t] \rightarrow  0, $$
where $J$ is the kernel of $\phi$ and is generated by all forms $F(T_1, \ldots, T_{\nu})$ such that $F(f_1, \ldots, f_{\nu})  = 0$. Note that $J$ is graded. The following theorem shows that the special fiber rings of the LCM dual and the given monomial ideals are isomorphic.

\begin{Theorem}\label{FiberIso}
Let $R = K[x_1, \ldots, x_n]$ be a polynomial ring in $n$ variables over a field $K$. Let $I$ be a monomial ideal such that $\height(I) \geq 2$ and $I$ is generated in the same degree.  Then the special fiber ring of $I$ and the special fiber ring of $\widehat{I}$ are isomorphic. 
$$\mathcal F(I) = K[It] \cong K[\widehat{I} t ]=  \mathcal F(\widehat I).$$
\end{Theorem}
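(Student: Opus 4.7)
The plan is to directly identify the defining toric ideals (the kernels of the natural presentations) of the two special fiber rings. Since both $I$ and $\widehat{I}$ are generated by the same number $\nu$ of monomials in a single degree (namely $f_i$ in degree $\delta$, and $\widehat{f_i} = m_I/f_i$ in degree $\deg(m_I) - \delta$), there are natural surjections
$$\phi \col K[T_1, \ldots, T_\nu] \to K[It], \qquad T_i \mapsto f_i t,$$
$$\widehat{\phi} \col K[T_1, \ldots, T_\nu] \to K[\widehat{I}\, t], \qquad T_i \mapsto \widehat{f_i}\, t.$$
I would first verify that $\{\widehat{f_1}, \ldots, \widehat{f_\nu}\}$ is actually the minimal monomial generating set of $\widehat{I}$: the monomials are distinct since $\widehat{f_i} = \widehat{f_j}$ forces $f_i = f_j$, and if $\widehat{f_j} \mid \widehat{f_i}$ then $f_i \mid f_j$, contradicting minimality of the generators of $I$. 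So both $\phi$ and $\widehat{\phi}$ are presentations of the corresponding special fibers.

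Next, I would show that $\ker \phi = \ker \widehat{\phi}$. Because the generators in each case lie in a single degree (multiplied by $t$), both kernels are homogeneous toric ideals, generated by binomials $T^\alpha - T^\beta$ with $|\alpha| = |\beta|$. For any such exponent vectors $\alpha, \beta$ with $|\alpha| = |\beta| = d$, using $\widehat{f_i} = m_I/f_i$ one computes
$$\prod_i \widehat{f_i}^{\,\alpha_i} \;=\; \frac{m_I^d}{\prod_i f_i^{\alpha_i}}, \qquad \prod_i \widehat{f_i}^{\,\beta_i} \;=\; \frac{m_I^d}{\prod_i f_i^{\beta_i}}.$$
Hence $\prod_i \widehat{f_i}^{\,\alpha_i} = \prod_i \widehat{f_i}^{\,\beta_i}$ in $R$ if and only if $\prod_i f_i^{\alpha_i} = \prod_i f_i^{\beta_i}$. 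Equivalently, $T^\alpha - T^\beta \in \ker \widehat{\phi}$ iff $T^\alpha - T^\beta \in \ker \phi$. Since both ideals are generated by such binomials, the two kernels coincide, and passing to the quotient yields an isomorphism $K[It] \cong K[\widehat{I}\, t]$ that matches $f_i t \leftrightarrow \widehat{f_i}\, t$.

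The height hypothesis enters only indirectly: it guarantees (via Lemma \ref{dualthm}) that the construction is symmetric, so that $\widehat{I}$ is itself a monomial ideal of height at least two generated in the same degree and the roles of $I$ and $\widehat{I}$ are interchangeable. I do not expect a serious obstacle here; the one point requiring care is confirming that $\{\widehat{f_i}\}$ is genuinely a minimal monomial generating set and that both toric ideals are truly generated in bidegree $(d,d)$ of the same total degree, so that the division calculation above makes sense as an equation of monomials in $R$ rather than merely in the fraction field.
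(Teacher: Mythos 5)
Your proposal is correct, and its computational core is the same as the paper's: present both fiber rings as quotients of $K[T_1,\ldots,T_\nu]$, use that the kernels are binomial (toric) ideals, and compare relations via the identity $\prod_i\widehat{f_i}^{\,\alpha_i}=m_I^{d}\big/\prod_i f_i^{\alpha_i}$ for $|\alpha|=d$. The difference is in how the two inclusions between the kernels are obtained. The paper proves only one inclusion, $w'(J)\subseteq J'$, by this computation, and then gets the reverse one by symmetry: this is exactly where $\height I\geq 2$ enters, via Lemma \ref{dualthm} ($\widehat{\widehat I}=I$), so that the same argument can be run from $\widehat I$ back to $I$ and equality follows because $w'$ and $v'$ are mutually inverse. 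You instead observe that the computation is an equivalence: for $|\alpha|=|\beta|=d$, cancelling $m_I^{d}$ shows $\prod_i f_i^{\alpha_i}=\prod_i f_i^{\beta_i}$ if and only if $\prod_i\widehat{f_i}^{\,\alpha_i}=\prod_i\widehat{f_i}^{\,\beta_i}$, which gives equality of the two kernels in one stroke. This buys a genuinely stronger statement: your argument never uses the height hypothesis or Lemma \ref{dualthm} at all, so your closing remark that the height is needed to make the roles of $I$ and $\widehat I$ interchangeable is superfluous in your own proof --- no interchange is required. (Likewise, equigeneration is only used to force $|\alpha|=|\beta|$ for the binomial generators, and that already follows from the extra factor $t$ in the presentation $T_i\mapsto f_it$.) Your preliminary check that $\{\widehat{f_i}\}$ is a minimal monomial generating set --- the $\widehat{f_i}$ are distinct and pairwise non-dividing since $\widehat{f_j}\mid\widehat{f_i}$ would force $f_i\mid f_j$ --- is correct and is left implicit in the paper; it is what identifies $K[\widehat I t]$ with $\mathcal F(\widehat I)$ as defined. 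The divisibility point you flag is harmless: each $f_i$ divides $m_I$, so $\prod_i f_i^{\alpha_i}$ divides $m_I^{d}$ and all quotients are genuine monomials of $R$.
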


\begin{proof} Suppose $I = (f_1, \ldots, f_{\nu})$ where $\{f_1, \ldots, f_{\nu}\}$ is a minimal monomial generating set of $I$. 

Since $I$ is a monomial ideal, by \cite{Taylor}, $J_r$, the degree $r$ piece of $J$,  is generated by polynomials of the form
\begin{equation}\label{relations}
T_\beta - T_\alpha,
\end{equation}
where $\alpha = (i_1, \ldots, i_r)$, $\beta = (j_1, \ldots, j_r)$ are non-decreasing sequences of integers such that $1 \leq i_1 \leq i_2  \leq \cdots \leq i_r \leq  \nu$ and $1 \leq j_1 \leq j_2  \leq \cdots \leq  j_r \leq \nu$ and we define $T_\alpha = \prod T_{i_k}$ and $T_\beta = \prod T_{j_k}$.   We also define $f_\alpha = \prod f_{i_k}$ and $f_\beta = \prod f_{j_k}$.

Since $\widehat{I}$ is also a monomial ideal, there is a surjective map $\psi: K[S_1, \ldots, S_{\nu}] \rightarrow K[\widehat{I}t]$ given by $\psi(S_i) = \widehat{f_i}t$.
The kernel of $\psi$ is $J'$ and $J'_r$ is generated by polynomials of the form
\begin{equation}\label{dualrelations}
S_\beta - S_\alpha, 
\end{equation}
for $\alpha = (i_{i_1}, \ldots, i_{i_r})$, $\beta  = (j_{i_1}, \ldots, j_{i_r})$ As above, we define $S_\alpha = \prod S_{i_k}$, $S_\beta = \prod S_{j_k}$, $\widehat{f}_\alpha = \prod \widehat{f_{i_k}}$ and $\widehat f_\beta = \prod \widehat{f_{j_k}}$. 
 First note that for $\alpha = (i_1, \ldots, i_r)$,
\begin{equation}\label{fhatalpha}
\widehat{f}_\alpha = \frac{m_I}{f_{i_1}} \frac{m_I}{f_{i_2}} \cdots \frac{m_I}{f_{i_r}} = \frac{m_I^r}{f_\alpha}.
\end{equation}

To show the two special fiber rings are isomorphic, we define maps
\begin{eqnarray*}w : K[It] &\rightarrow & K[\widehat{I}t]\\
f_it & \mapsto & \widehat{f_i}t
\end{eqnarray*}
and 
\begin{eqnarray*}
w': K[T_1, \ldots, T_{\nu}] &\rightarrow & K[S_1, \ldots, S_{\nu}]\\
T_i & \mapsto & S_i
\end{eqnarray*}

For $h \in \ker \phi$, we want to show that $w'(h) \in \ker \psi$. 
In particular consider a generator $h\in J_r$ of the form as in (\ref{relations}). We will show $w'(h)$ is in the kernel of $\psi$. 
Write 
$$h = T_\beta -  T_\alpha \in \ker \phi.$$ 
Then $w'(h) = S_\beta - S_\alpha$. 
As $h$ is in the kernel of $\phi$
$$\phi(h) = f_\beta - f_\alpha =0.$$
Now using (\ref{fhatalpha}), we have
\begin{eqnarray*} \psi(w'(h)) = \psi(S_\beta - S_\alpha) &=& \widehat{f_\beta} - \widehat{f}_\alpha\\
&=&\frac{m_I^r}{f_\beta} - \frac{m_I^r}{f_\alpha}\\
&=& \frac{(f_\alpha - f_\beta) m_I^r}{f_\alpha f_\beta} = 0,
\end{eqnarray*}
since $f_\beta - f_\alpha = 0$. 
Thus $w'(h) \in \ker \psi = J'$, so $w'(J) \subseteq J$.  Since $\height(I) \geq 2$, $\widehat{\widehat I} = I$ by Lemma \ref{dualthm}. In a similar fashion, we can define a map $v: K[\widehat{I}t] \rightarrow K[\widehat{\widehat{I}}t] = K[It]$, and $v': K[S_1, \ldots, S_{\nu}] \rightarrow K[T_1, \ldots, T_{\nu}]$.    By the same argument as above, we have that $v'(J') \subseteq J$.  Since $w'$ and $v'$ are inverse maps, we have $J = v'(w'(J)) \subseteq v'(J') \subseteq J$. Thus $v'(J') = J$.  Similarly, $w'(J) = J'$. Thus we have
 $$K[It] \cong \frac{K[T_1, \ldots, T_{\nu}]}{J} \cong \frac{K[S_1, \ldots, S_{\nu}]}{J'}  \cong K[\widehat I t].$$

\end{proof}

\section{LCM-duals of Ferres Graphs}

In this section, we examine LCM-duals of edge ideals associated to Ferrers graphs.   We begin by recalling several definitions and results about edge ideals and graphs and then consider a connection between squarefree monomial ideals and LCM-duals.  

Let $R = K[x_1, \ldots, x_n]$ be the polynomial ring on $n$ variables. Suppose $G$ is a finite simple graph (that is, a graph that does not have loops or multiple edges) with vertex set labeled $x_1, \ldots, x_n$.
We will consider squarefree ideals generated in degree 2.  The edge ideal of $G$, denoted by $I(G)$ is the ideal of $R$ generated by the squarefree monomials $x_ix_j$ such that $\{x_i, x_j\}$ is an edge of $G$. There is a one-to-one correspondence between finite simple graphs and squarefree monomial ideals generated in degree 2. The \emph{complement} of a graph $G$, written $G^c$, is the graph whose vertex set is $V$ and whose edge set contains the edge $\{x_i, x_j\}$ if and only if $\{x_i, x_j\}$ is not an edge of $G$. We write $I^c=I(G^c)$, the edge ideal of $G^c$. Let $\sigma \subseteq \{1, \ldots, n \}$ and let
$\mathbf{x}^\sigma = \displaystyle \prod_{i \in \sigma} x_i$. Note that any squarefree monomial in $K[x_1, \ldots, x_n]$ can be written in this way.  Let $\p_\sigma$ be the prime ideal $\p_\sigma = \langle x_i | i \in \sigma\rangle $. For any squarefree monomial ideal $I = (\mathbf{x}^{ \sigma_1 },  \ldots, \mathbf{x}^{\sigma_r}) \subset K[x_1, \ldots, x_n]$, the \emph{Alexander dual} of $I$ is 
$$I^\star = \p_{\sigma_1} \cap \cdots \cap \p_{\sigma_r}.$$ 

We show that the Alexander dual is related to the LCM-dual for a well-known class of graphs known as Ferrers graphs. The edge ideals of these graphs were studied in \cite{CN1}. 

Recall that a \emph{Ferrers graph} $G$ is a bipartite graph on the vertex partition $X = (x_1, \ldots, x_m)$ and $Y = (y_1, \ldots, y_n)$ such that if $\{x_i, y_j\}$ is an edge of $G$ then so is $\{x_p, y_q\}$ for $1 \leq p \leq i$ and $1 \leq q \leq j$, moreover $(x_1, y_n)$ and $(x_m, y_1)$ are edges of $G$. Associated to a Ferrers graph, there is a partition $\lambda = (\lambda_1, \ldots, \lambda_m)$ where $\lambda_i$ is the degree of the vertex $x_i$. A \emph{Ferrers ideal} $I_\lambda$ is the edge ideal of a Ferrers graph associated to the partition $\lambda$. We can also associate a diagram $\mathbf{T}_\lambda$, called a \emph{Ferrers tableau}, which is a diagram where we have a cell in position $(i,j)$ if and only if $(x_i,y_j)$ is an edge in the Ferrers graph. 

In the following theorem, we show that the LCM-dual of a Ferrers ideal is the Alexander dual of the edge ideal of the complement of the associated Ferrers graph. 

\begin{Theorem}\label{AlexDual}
Let $R = K[x_1, \ldots, x_m, y_1, \ldots, y_n]$ and $I_\lambda$ be the Ferrers ideal associated to a Ferrers graph $G_\lambda$ with associated partition $\lambda = (\lambda_1=n, \ldots, \lambda_m)$. Then a primary decomposition for $\widehat {I_\lambda}$ is given by

$$\widehat{I_\lambda} = \bigcap_{i<j} (x_i, x_j) \cap \bigcap_{i<j} (y_i, y_j) \cap \bigcap_{\substack{1 \leq i \leq m \\ 1 \leq j \leq n \\ x_iy_j \not \in I_\lambda}} (x_i, y_j).$$
Therefore $\widehat{I_\lambda}=(I(G_\lambda^c))^\star$.
\end{Theorem}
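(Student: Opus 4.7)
The plan is to first establish the equality of ideals by a two-way inclusion argument, and then observe that the right-hand side is manifestly the Alexander dual of $I(G_\lambda^c)$.

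Since $\lambda_1=n$, the vertex $x_1$ is joined to every $y_j$, so every $y_j$ appears as a variable in some minimal generator $x_iy_j$ of $I_\lambda$; likewise every $x_i$ appears (because $\lambda_i\ge 1$ as $\lambda$ is a partition). Consequently, the first step is to compute
\[
m_{I_\lambda}=x_1x_2\cdots x_m\,y_1y_2\cdots y_n,
\]
so that $\widehat{x_iy_j}=\bigl(\prod_{p\ne i}x_p\bigr)\bigl(\prod_{q\ne j}y_q\bigr)$ for each edge $(i,j)$ of $G_\lambda$.

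Next, to show $\widehat{I_\lambda}$ is contained in the displayed intersection, I would check that each generator $\widehat{x_iy_j}$ lies in each prime on the right. For a prime $(x_a,x_b)$ with $a<b$, the monomial $\widehat{x_iy_j}$ misses only the $x$-variable $x_i$, so at least one of $x_a,x_b$ divides it; the $(y_a,y_b)$ case is symmetric. For a prime $(x_a,y_b)$ with $x_ay_b\notin I_\lambda$, we have $(a,b)\ne(i,j)$ since $(i,j)\in G_\lambda$, so either $x_a$ or $y_b$ divides $\widehat{x_iy_j}$.

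The reverse inclusion is the main content and where I would spend most care. Let $J$ denote the intersection on the right; it suffices to show every monomial $m\in J$ lies in $\widehat{I_\lambda}$. Being in $\bigcap_{i<j}(x_i,x_j)$ forces the set $\{k:x_k\nmid m\}$ to have size $\le 1$, and similarly for the $y$-variables. This yields four cases. If $m$ is divisible by all $x$'s and all $y$'s, then $m_{I_\lambda}\mid m$, and $m_{I_\lambda}=x_iy_j\cdot\widehat{x_iy_j}$ already lies in $\widehat{I_\lambda}$. If exactly one $x_k$ is missing from $m$ (but all $y$'s are present), pick any $j$ with $(k,j)\in G_\lambda$ (available since $\lambda_k\ge 1$); then $m_{I_\lambda}/x_k=y_j\,\widehat{x_ky_j}\in\widehat{I_\lambda}$ and this divides $m$. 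The case where exactly one $y_\ell$ is missing is symmetric, using $\lambda_1=n$ to find an edge $(1,\ell)\in G_\lambda$. Finally, if $x_k$ and $y_\ell$ are both missing from $m$, then $\widehat{x_ky_\ell}=m_{I_\lambda}/(x_ky_\ell)$ divides $m$; membership of $m$ in the prime $(x_k,y_\ell)$ whenever $x_ky_\ell\notin I_\lambda$ would be impossible, so we must have $(k,\ell)\in G_\lambda$, and then $\widehat{x_ky_\ell}$ is a genuine generator of $\widehat{I_\lambda}$. The delicate step here is this last case: it is precisely the primes $(x_i,y_j)$ with $x_iy_j\notin I_\lambda$ that prevent spurious monomials, so the case analysis is what forces the exact set of primes appearing in the decomposition.

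For the last assertion, observe that the edges of $G_\lambda^c$ are exactly the pairs $\{x_i,x_j\}$ with $i<j$, the pairs $\{y_i,y_j\}$ with $i<j$, and the pairs $\{x_i,y_j\}$ with $x_iy_j\notin I_\lambda$. Therefore the intersection on the right is the intersection of the primes $\mathfrak{p}_\sigma$ as $\sigma$ ranges over the edges of $G_\lambda^c$, which is by definition $(I(G_\lambda^c))^\star$.
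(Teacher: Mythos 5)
Your proposal is correct, but it takes a genuinely different route from the paper. You argue directly: after computing $m_{I_\lambda}=x_1\cdots x_m y_1\cdots y_n$ (valid because $\lambda_1=n$ and every $\lambda_i\ge 1$, so every variable occurs in some generator), you verify the easy containment generator-by-generator, and for the reverse containment you exploit the fact that a monomial lying in all the primes $(x_i,x_j)$ and $(y_i,y_j)$ can miss at most one $x$-variable and at most one $y$-variable, then split into the four resulting cases, with the mixed primes $(x_i,y_j)$, $x_iy_j\notin I_\lambda$, ruling out exactly the bad case where both a missing $x_k$ and a missing $y_\ell$ correspond to a non-edge. The paper instead proceeds by induction on $\lambda_1+\cdots+\lambda_m$: it writes $I_\lambda=I_{\lambda'}+(x_my_1)$ or $I_\lambda=I_{\lambda'}+(x_my_{\lambda_m})$ depending on whether $\lambda_m=1$, relates $\widehat{I_\lambda}$ to $\widehat{I_{\lambda'}}$ via the lcm, and then manipulates sums and intersections of the components (absorbing the prime $(x_m,y_{\lambda_m})$ and the new generator) to pass the decomposition up the induction. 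Your argument is more elementary and self-contained: it needs no recursion, no identity juggling with intersections, and it makes transparent exactly why each family of primes must appear (the mixed primes are what exclude spurious monomials). The paper's inductive scheme, on the other hand, exposes the recursive structure of Ferrers ideals under the LCM-dual ($\widehat{I_\lambda}=x_m\widehat{I_{\lambda'}}+(\cdots)$, etc.), which is the mechanism reused elsewhere in that line of work; both establish the stated equality, and in both cases the final identification with $(I(G_\lambda^c))^\star$ is the same immediate reading-off of the edges of the complement. One small point to keep explicit when writing this up: since $J$ is an intersection of monomial primes it is a monomial ideal, so checking the reverse inclusion only on monomials, as you do, is legitimate, and monomial membership in $(x_a,x_b)$ means one of the two variables divides the monomial.
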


\begin{proof}
We proceed by induction on $\lambda_1 + \cdots + \lambda_m$. If this sum is one, then $R = K[x_1, y_1]$ and  $\widehat{I_\lambda} = R$ and all claims are trivial. If $m=1$, then $I_\lambda = (x_1y_1, \ldots, x_1y_n) \subseteq K[x_1, y_1, \ldots, y_n]$ gives $\widehat {I_\lambda} = (\{ y_1 \cdots \hat{y_i} \cdots y_n| 1 \leq i  \leq n \} = \displaystyle \bigcap_{i<j} (y_i, y_j)$.

Suppose that $m \geq 2$.  We first consider the case when $\lambda_m  = 1$. Let $\lambda' = (\lambda_1, \ldots, \lambda_{m-1})$.  Notice that $I_{\lambda'}
 \subseteq K[x_1, \ldots, x_{m-1}, y_1, \ldots, y_n]$. In particular,  $I_\lambda = I_{\lambda'} +(x_my_1)$. As $I_\lambda$ and $I_{\lambda'}$ are Ferrers ideals, $m_{I_{\lambda}}=\lcm(I_\lambda) = \prod_{i=1}^m x_i \prod_{j=1}^n y_j = x_m \lcm (I_{\lambda'})=x_{m}m_{I_{\lambda'}}$ giving $$\widehat{ I_\lambda} = x_m \widehat{I_{\lambda'}} + (x_1 \cdots x_{m-1}y_2 \cdots y_n).$$
Since $\lambda_1 + \cdots + \lambda_{m-1} < \lambda_1 + \cdots + \lambda_m$, by induction after passing to $K[x_1, \ldots, x_{m-1}, y_1, \ldots, y_n]$ we have
$$\widehat{I_{\lambda'}} =  \bigcap_{1 \leq i < j \leq m-1} (x_i, x_j) \cap \bigcap_{i <j} (y_i, y_j) \cap \bigcap_{\substack{1 \leq i \leq m-1 \\ 1 \leq j \leq n \\ x_iy_j \not \in I_{\lambda'}}} (x_i, y_j) \subset K[x_1, \ldots, x_{m-1}, y_1, \ldots, y_n].$$
Notice that $x_1 \cdots x_{m-1}y_2 \cdots y_n$ is in every component of the primary decomposition. We thus have
\begin{eqnarray*}
\widehat {I_\lambda} &=& x_m \widehat{I_{\lambda'}} + (x_1 \cdots x_{m-1}y_2 \cdots y_n)\\ &=& [(x_m) \cap \widehat {I_{\lambda'}}] + [(x_1 \cdots x_{m-1}y_2 \cdots y_n) \cap \widehat {I_{\lambda'}}]\\
&=& (x_m, x_1 \cdots x_{m-1}y_2 \cdots y_n) \cap \widehat{I_{\lambda'}}\\
&=& \bigcap_{1 \leq i \leq m-1} (x_i, x_m) \cap \bigcap_{2 \leq j \leq n} (x_m, y_j) \cap  \widehat {I_{\lambda'}}\\
&=& \bigcap_{i<j} (x_i, x_j) \cap \bigcap_{i<j} (y_i, y_j) \cap \bigcap_{\substack{1 \leq i \leq m \\ 1 \leq j \leq n \\ x_iy_j \not \in I_\lambda}} (x_i, y_j).
\end{eqnarray*}

Now we consider the case where $\lambda_m > 1$. Consider $\lambda' = (\lambda_1, \ldots, \lambda_m -1)$.   We have that $I_\lambda = I_{\lambda'} + (x_m y_{\lambda_m})$. In particular since $\lambda_m -1 \geq 1$, $\lcm(I_\lambda) = \lcm(I_{\lambda'}) = \prod_{1 \leq i \leq m} x_i \prod_{1 \leq j \leq n} y_j$, hence
$$\widehat{I_\lambda} = \widehat{I_{\lambda'}} + (x_1 x_2 \cdots x_{m-1} y_1 \cdots \widehat{y_{\lambda_m}} \cdots y_n).$$
By induction, the primary decomposition for $\widehat{I_{\lambda'}}$ is given by
$$\widehat{I_{\lambda'}}= \bigcap_{1 \leq i < j \leq m} (x_i, x_j) \cap \bigcap_{1 \leq i <j \leq n} (y_i, y_j) \cap \bigcap_{\substack{1 \leq i \leq m \\ 1 \leq j \leq n \\ x_iy_j \not \in I_{\lambda'}}} (x_i, y_j).$$
Let $J$ be the intersection of all the components of this primary decomposition that contain the monomial $x_1 x_2 \cdots x_{m-1} y_1 \cdots \widehat {y_{\lambda_m}} \cdots y_n$. Note $J$ contains every component in the decomposition $I_{\lambda'}$ except $(x_m, y_{\lambda_m})$. 
Then 
$$\widehat{I_{\lambda'}} = J \cap (x_m, y_{\lambda_m}).$$
We claim that the primary decomposition of $\widehat{I_\lambda}$ is $J$ which would in turn establish our claim.  Noting that $x_1 \cdots x_{m-1} y_1 \cdots \widehat{y_{\lambda_m}} \cdots y_n \in J$ we get
\begin{eqnarray*}
\widehat{I_\lambda} &=& \widehat{I_{\lambda'}} + (x_1x_2 \cdots x_{m-1}y_1 \cdots \widehat{y_{\lambda_m}} \cdots y_n)\\
&=& \left (J \cap (x_m, y_{\lambda_m})\right)+ (x_1\cdots x_{m-1}y_1 \cdots \widehat{y_{\lambda_m}} \cdots y_n)\\
& =&\left (J \cap (x_m, y_{\lambda_m})\right)+ \left( J \cap  (x_1\cdots x_{m-1}y_1 \cdots \widehat{y_{\lambda_m}} \cdots y_n) \right)\\
&=& J \cap  \left ((x_m, y_{\lambda_m}) +  (x_1x_2 \cdots x_{m-1}y_1 \cdots \widehat{y_{\lambda_m}} \cdots y_n) \right)\\
&=& J \cap (x_m, y_{\lambda_m}, x_1x_2 \cdots x_{m-1}y_1 \cdots \widehat{y_{\lambda_m}} \cdots y_n).
\end{eqnarray*}
Moreover, we notice that
$$(x_m, y_{\lambda_m},x_1x_2 \cdots x_{m-1}y_1 \cdots \widehat{y_{\lambda_m}} \cdots y_n) = \bigcap_{1 \leq i \leq m-1} (x_m, y_{\lambda_m}, x_i) \cap \bigcap_{\substack{1 \leq j \leq n\\ j \neq \lambda_m}} (x_m, y_{\lambda_m}, y_j).$$
Furthermore, we note
$$J  =\bigcap_{1 \leq i < j \leq m} (x_i, x_j) \cap \bigcap_{1 \leq i <j \leq n} (y_i, y_j) \cap \bigcap_{\substack{1 \leq i \leq m \\ 1 \leq j \leq n \\ x_iy_j \not \in I_{\lambda}}} (x_i, y_j),$$
and so $(x_m, y_{\lambda_m}, x_i) \supseteq (x_i, x_m) \supseteq J$ and $(x_m, y_{\lambda_m}, y_j) \supseteq (y_{\lambda_m}, y_j) \supseteq J$. 
Thus  $J \cap (x_m, y_{\lambda_m}, x_i) = J$ and $J \cap (x_m, y_{\lambda_m}, y_j) = J$ giving 
\begin{eqnarray*}
\widehat{I_\lambda} &=& J \cap (x_m, y_{\lambda_m}, x_1x_2 \cdots x_{m-1}y_1 \cdots \widehat{y_{\lambda_m}} \cdots y_n)\\\
&=& J  \cap \bigcap_{1 \leq i \leq m-1} (x_m, y_{\lambda_m}, x_i) \cap \bigcap_{\substack{1 \leq j \leq n\\ j \neq \lambda_m}} (x_m, y_{\lambda_m}, y_j)\\
&=& J.
\end{eqnarray*}

The final statement comes from the fact that $I_{\lambda}$ is a Ferrers ideal and the definitions of $G_{\lambda}^c$ and the Alexander dual. 
\end{proof}

Unfortunately, this does not hold for arbitrary edge ideals. 
\begin{Example}
Consider the ideal $I = (x_1y_1, x_1y_2, x_2y_1, x_2y_2, y_1y_2)$ which is not a Ferrers ideal. The edge ideal of the complement of the associated graph is $I^c = (x_1x_2)$. The Alexander dual of $J$ is given by 
$$(I^{c})^{\star} = (x_1, x_2),$$
but the LCM-dual of $I$ is given by
$$\widehat{I} = (x_2y_2, x_2y_1, x_1y_2, x_1y_1, x_1x_2).$$
\end{Example}

The following definition of Corso and Nagel \cite{CN2} defines a specialization of Ferrers ideal. This construction allows us to obtain a non-squarefree monomial ideal of degree two from a Ferrers ideal. 

\begin{Definition}\label{Spec}
Let  $S = K[x_1, \ldots, x_m, y_1, \ldots, y_n]$ be a polynomial ring over a field $K$ and $I$ be a monomial ideal in $S$. Let $\sigma:\{y_1, \ldots, y_n\} \rightarrow \{x_1, \ldots, x_k\}$ be a map  that sends $y_i$ to $x_i$ where $k = \max\{m,n\}$ and $x_{m+1}, \ldots, x_k$ are (possibly) additional variables. By abuse of notation we use the same symbol to denote the the substitution homomorphism $\sigma:S \rightarrow R$ where $R = K[x_1, \ldots, x_k]$ given by $x_i \mapsto x_i$ and $y_i \mapsto \sigma(y_i)$. We call $\sigma$ a \emph{specialization map} and the monomial ideal $\overline I := \sigma (I) \subseteq R$ the \emph{specialization} of $I$. 
\end{Definition}

Here is an example of the specialization of an ideal. Let $S = K[x_1, x_2, y_1, y_2, y_3]$. Consider the Ferrers ideal $I = (x_1y_1, x_1y_2, x_1y_3, x_2y_1, x_2y_2) \subseteq S$. The specialization of $I$ is the ideal 
$$\overline I = (x_1^2, x_1x_2, x_1x_3, x_2^2).$$
Since the specialization map sends $x_1y_2$ and $x_2y_1$ to the same element in $K[x_1, x_2, x_3]$, $\overline I$ has 4 minimal generators but $I$ has 5 minimal generators. Motivated by this example, we give the definition of a \emph{generalized Ferrers ideal}. The specialization map will preserve the number of generators for a generalized Ferrers ideal.

\begin{Definition}\label{genferdef}
Assume $n \geq m$.  Let $\lambda = (\lambda_1, \ldots, \lambda_m)$ be a partition and let $\mu = (\mu_1, \ldots, \mu_m) \in \Z^m$ be a vector
$$0 \leq \mu_1 \leq \cdots \leq \mu_m <\lambda_m.$$
Since $\lambda_m \leq \lambda_{m-1} \leq \cdots \leq \lambda_1$, in particular, $\mu_i < \lambda_i$. 
The ideal
$$I_{\lambda-\mu} := (x_iy_j | 1 \leq i \leq n, \mu_i < j \leq \lambda_i) \subseteq I_\lambda,$$
is called a \emph{generalized Ferrers ideal}.
\end{Definition}

Notice that when $\mu_i \geq i-1$ for $i =1, \ldots, m$, the generalized Ferrers ideal and its specialization have the same number of generators therefore they may be associated with same tableau after the specialization.

\begin{Lemma}\cite[3.9]{CN1}
Suppose  $\lambda = (\lambda_1, \ldots, \lambda_m)$ is a partition and let $\mu = (\mu_1, \ldots, \mu_m) \in \Z^m$ as in {\em Definition \ref{genferdef}}. If in addition, $\mu_i \geq i-1$ for $i = 1, \ldots, m$, the ideals $I_{\lambda-\mu}$ and $\overline{I}_{\lambda-\mu}$ have the same number of minimal generators, namely $\lambda_1 + \cdots + \lambda_m - [\mu_1 + \cdots + \mu_m]$.
\end{Lemma}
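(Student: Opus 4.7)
The plan is to show that the specialization map $\sigma$ is injective on the minimal generating set of $I_{\lambda-\mu}$; this will suffice because every generator of $\overline{I}_{\lambda-\mu}$ has degree two, so no image can properly divide another and the distinct images automatically form the minimal monomial generating set of $\overline{I}_{\lambda-\mu}$. First I would record that, by Definition \ref{genferdef}, the minimal generating set of $I_{\lambda-\mu}$ is $\{x_iy_j : 1 \le i \le m,\ \mu_i < j \le \lambda_i\}$, which has cardinality
\[\sum_{i=1}^{m}(\lambda_i-\mu_i)\;=\;\lambda_1+\cdots+\lambda_m\;-\;(\mu_1+\cdots+\mu_m),\]
the target count. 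The task then reduces to verifying that $\sigma(x_iy_j)=x_ix_j$ and $\sigma(x_{i'}y_{j'})=x_{i'}x_{j'}$ are distinct monomials whenever $(i,j)\ne(i',j')$.

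Suppose instead that $x_ix_j=x_{i'}x_{j'}$ in $R$ with $(i,j)\ne(i',j')$. Since these are degree-two monomials, equality forces the multisets $\{i,j\}$ and $\{i',j'\}$ to coincide, and distinctness of the index pairs then forces $i'=j$, $j'=i$, and $i\ne j$. Thus the only way injectivity can fail is if both $x_iy_j$ and $x_jy_i$ lie in the generating set of $I_{\lambda-\mu}$ for some pair $i\ne j$; note this also requires $j\le m$, since $x_jy_i$ has $x$-index $j$.

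The key step, and the only place the hypothesis $\mu_k\ge k-1$ enters, is to rule out such a symmetric pair. Without loss of generality assume $i<j$; then the two membership conditions read $\mu_i<j\le\lambda_i$ and $\mu_j<i\le\lambda_j$. But $i<j$ means $i\le j-1$, so by hypothesis $\mu_j\ge j-1\ge i$, contradicting $\mu_j<i$. The contradiction shows $\sigma$ is injective on the generating set, so $I_{\lambda-\mu}$ and $\overline{I}_{\lambda-\mu}$ have the same number of minimal generators. I expect the main conceptual point to be recognizing that the bound $\mu_i\ge i-1$ is precisely what forbids a transposed pair $(x_iy_j,\,x_jy_i)$ of generators with $i<j$; once that observation is made, the remainder is a short index argument and some bookkeeping for the total count.
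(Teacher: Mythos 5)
Your argument is correct: the count $\sum_i(\lambda_i-\mu_i)$ is right, and the hypothesis $\mu_i\ge i-1$ does exactly the work you assign to it, since for any generator $x_iy_j$ one has $j>\mu_i\ge i-1$, i.e.\ $j\ge i$, so the specialization $x_iy_j\mapsto x_ix_j$ is injective on generators (your contradiction ruling out a transposed pair $x_iy_j,\,x_jy_i$ is an equivalent formulation), and distinct degree-two monomials are automatically a minimal generating set. Note that the paper itself gives no proof of this statement --- it is quoted as Lemma 3.9 of Corso--Nagel \cite{CN1} --- so there is no internal argument to compare against; your proof is the standard one and is complete as written.
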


Recall that we say a monomial ideal $I$ is \emph{strongly stable} if for all monomials $m\in I$, whenever $x_i|m$ then $\frac{x_jm}{x_i} \in I$ for every $j<i$. When $\mu_i = i-1$, the specialization of a generalized Ferrers ideal, $\overline{I}_{\lambda-\mu}$, is a strongly stable ideal. 

\begin{Lemma}\cite{CN1}
Let $\lambda = (\lambda_1, \ldots, \lambda_m)$ be a partition and let $\mu = (0, 1, \ldots, m-1)$. The specialization of the generalized Ferrers ideal $I_{\lambda-\mu}$ is a strongly stable ideal generated in degree two. 
\end{Lemma}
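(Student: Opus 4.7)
The plan is to unravel what the specialization looks like concretely and then verify the strongly stable condition directly on the minimal generators.

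With $\mu = (0, 1, \ldots, m-1)$, the generating set of $I_{\lambda-\mu}$ is $\{x_i y_j : 1 \le i \le m,\ i \le j \le \lambda_i\}$, and the definition of a generalized Ferrers ideal already forces $\mu_i < \lambda_i$, so $\lambda_i \ge i$. Applying $y_j \mapsto x_j$, the generators of $\overline{I}_{\lambda-\mu}$ become $\{x_i x_j : 1 \le i \le m,\ i \le j \le \lambda_i\}$. These products are pairwise distinct, since the constraint $i \le j$ pins down the ordered pair from the unordered product, so they form a minimal generating set. In particular $\overline{I}_{\lambda-\mu}$ is generated in degree two, and all that is left is strong stability.

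For that, I would invoke the standard reduction that a monomial ideal is strongly stable if and only if the operation $m \mapsto x_k m / x_\ell$ (for $k < \ell$ with $x_\ell \mid m$) sends every minimal generator into the ideal; this is routine and follows by writing a general element of the ideal as $m' g$ for a minimal generator $g$ and splitting on whether $x_\ell$ divides $m'$ or $g$. Fix a generator $g = x_i x_j$ with $i \le j \le \lambda_i$. There are two factors of $g$ to lower. Lowering $x_j$ to $x_k$ with $i \le k < j$ produces $x_i x_k$, which is itself a generator since $i \le k \le \lambda_i - 1 < \lambda_i$. Lowering $x_j$ to $x_k$ with $k < i$ produces $x_k x_i$, which is a generator as soon as $i \le \lambda_k$, and this follows from the monotonicity $\lambda_k \ge \lambda_i \ge j \ge i$. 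Finally, lowering $x_i$ to $x_k$ with $k < i$ produces $x_k x_j$, which is a generator provided $j \le \lambda_k$, again immediate from $\lambda_k \ge \lambda_i \ge j$.

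There is no real obstacle here; the argument is essentially bookkeeping, and the only subtlety worth flagging is the symmetry between the two factors of a generator and the degenerate case $i = j$, both of which are absorbed cleanly by the monotonicity of $\lambda$.
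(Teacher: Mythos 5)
Your proposal is correct and follows essentially the same route as the paper: both arguments verify the strongly stable exchange property directly on the degree-two generators $x_ix_j$ (with $i \le j \le \lambda_i$), using the monotonicity of $\lambda$ and the choice $\mu_i = i-1$, with the paper phrasing the checks via preimages $x_ky_j$, $x_iy_\ell$, $x_\ell y_i$ in $I_{\lambda-\mu}$ while you check the index conditions directly in the specialized ideal. Your explicit remarks that checking the exchange move on minimal generators suffices and that the specialized generators are distinct (hence the ideal is generated in degree two) are fine and only make explicit what the paper leaves implicit.
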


\begin{proof}
We need to show that $\overline{I}_{\lambda- \mu}$ is strongly stable. 
Let $x_ix_j \in \overline{I}_{\lambda-\mu}$. Assume $i \leq j$. Then $x_i x_j = \sigma(x_iy_j)$ for $x_i y_j \in I_{\lambda-\mu}$. Note if $i < j$, then $x_jy_i \not \in I_{\lambda-\mu}$ since $\mu_j = j-1\geq i$. Suppose $1 \leq k \leq i$, then by the definition of $I_{\lambda-\mu}$, $x_k y_j \in I_{\lambda-\mu}$. Indeed, $\mu_k = k-1 \leq i-1 = \mu_i < j \leq \lambda_i \leq \lambda_k$. 
 Thus $\sigma(x_ky_j) = x_kx_j  = x_k \frac{x_ix_j}{x_i} \in \overline{I}_{\lambda-\mu}$.  
 Now suppose $1 \leq \ell \leq j$.  If $\ell > \mu_i = i-1$, then  $x_i y_\ell \in I_{\lambda-\mu}$ because $\mu_i < \ell \leq j \leq \lambda_{i}$.  Thus $\sigma(x_iy_\ell) = x_i x_\ell  = x_\ell \frac{x_i x_j}{x_j} \in \overline{I}_{\lambda-\mu}$. 
 Suppose $\ell \leq \mu_i = i-1$. Note that $x_iy_i \in I_{\lambda-\mu}$ and $\ell < i$, we have $x_\ell y_i \in I_{\lambda-\mu}$. Thus $\sigma(x_\ell y_i) = x_\ell x_i \in \overline{I}_{\lambda-\mu}$.  Indeed, $\mu_{\ell} = \ell - 1 < i \leq \lambda_i \leq \lambda_{\ell}$.   We have shown that the exchange property holds on the generators so $\overline I_{\lambda - \mu}$ is strongly stable. 
\end{proof}

\begin{Example}\label{443}
Let $S = K[x_1, x_2, x_3, y_1, y_2, y_3, y_4]$ and $I$ be the  Ferrers ideal for $\lambda = (4,4,3)$, that is, 
$$I_\lambda = (x_1y_1, x_1y_2, x_1y_3, x_1y_4, x_2y_1, x_2y_2, x_2y_3, x_2y_4, x_3y_1, x_3y_2, x_3y_3).$$
Let $\mu_i = i-1$ for $i = 1, 2, 3$. 
The generalized Ferrers ideal is
$$I_{\lambda-\mu} = (x_1y_1, x_1y_2, x_1y_3, x_1y_4, x_2y_2, x_2y_3, x_2y_4, x_3y_3).$$
Then the specialization map yields the specialization 
$$\overline I_{\lambda-\mu}= (x_1^2, x_1x_2, x_1x_3, x_1x_4, x_2^2, x_2x_3, x_2x_4, x_3^2).$$
Note that $\overline I_{\lambda-\mu}$ is a strongly stable ideal in $R = K[x_1, x_2, x_3, x_4]$. 
\end{Example}

Conversely, given a strongly stable ideal generated in degree two, \[I=\{x_1^2,x_1x_2,\ldots,x_1x_{\lambda_1},x_2^2,x_2x_3,\ldots,x_2x_{\lambda_2},\ldots,x_m^2,x_mx_{m+1},\ldots,x_mx_{\lambda_m}\},\] we can think of $I$ as the specialization of a generalized Ferrers ideal $I_{\lambda-\mu}$ with $\lambda=\{\lambda_1,\ldots,\lambda_m \}$ and $\mu=\{0,1,2,\ldots,m-1\}$ and the associated tableau $T _{\lambda-\mu}= T_I$ has a square in the $i$th row and $j$th column when  $x_ix_j \in I$. Figure \ref{TableauPic} illustrates the tableau $T _{\lambda-\mu}= T_I$ where $I$ comes from the specialization of a generalized Ferrers ideal associate to $\lambda = (4,4,3)$ and $\mu = (0,1,2)$ as in the Example \ref{443}, i.e. $I = (x_1^2, x_1x_2, x_1x_3, x_1x_4, x_2^2, x_2x_3, x_2x_4, x_3^2)$.

\begin{figure}[h]
\centering
\begin{tikzpicture}[scale=.75]
\node [label = above: {\Large $x_1$}] at (5.5,5) {};
\node [label = above: {\Large $x_2$}] at (6.5,5) {};
\node [label = above: {\Large $x_3$}] at (7.5,5) {};
\node [label = above: {\Large $x_4$}] at (8.5,5) {};
\node [label = left: { \Large $x_1$}] at (5,4.5){};
\node [label = left: {\Large  $x_2$}] at (5,3.5){};
\node [label = left: {\Large  $x_3$}] at (5,2.5){};
\draw [line width=1pt, color=black] (5,5)--(9,5);
\draw[line width = 1pt] (5,5)--(5,4);
\draw[line width=1pt] (5,4)--(9,4);
\draw[line width=1pt](6,5)--(6,3);
\draw[line width=1pt](6,3)--(9,3);
\draw[line width=1pt](7,5)--(7,2);
\draw[line width=1pt](8,5)--(8,2);
\draw[line width=1pt](9,5)--(9,3);
\draw[line width = 1pt](7,2)--(8,2);
\end{tikzpicture}
\caption{Ferrers tableau of $I$: $T_I=T_{\lambda-\mu}$}  \label{TableauPic}
\end{figure}
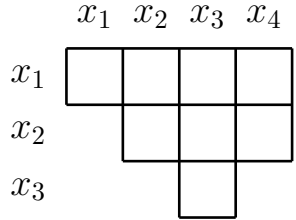

If $I$ is a strongly stable ideal generated in degree 2 of height $m$, then $m_I = x_1^2x_2^2 \cdots x_m^2x_{m+1} \cdots x_n$ and so $\widehat I$ is a monomial ideal generated in degree $n+m-2$. Furthermore, we may associate the tableau $T_{\widehat{I}} = T_I=T _{\lambda-\mu}$ to the ideal $\widehat I$ where the square in row $i$ and column $j$ is associated to the generator $\widehat{x_i x_j} = \displaystyle \frac{x_1^2 \cdots x_m^2x_{m+1} \cdots x_n}{x_i x_j} = \frac{x_1 \cdots x_m}{x_i} \frac{x_1 \cdots x_n}{x_j}$. By abuse of notation, the top labels are $\widehat{x_j} = \frac{x_1 \cdots x_n}{x_j}$ and the side labels are $\widehat{x_i} = \frac{x_1 \cdots x_m}{x_i}$. 

\begin{Example}
Let $I = (x_1^2, x_1x_2, x_1x_3, x_1x_4, x_2^2, x_2x_3, x_2x_4, x_3^2)$, we have $m_I = x_1^2x_2^2x_3^2x_4$ and
$$\widehat{I} = \left(x_2^2x_3^2x_4, x_1x_2x_3^2x_4, x_1x_2^2x_3x_4, x_1x_2^2x_3^2, x_1^2x_3^2x_4, x_1^2x_2x_3x_4, x_1^2x_2x_3^2, x_1^2x_2^2x_4\right).$$ The tableau $\widehat{I}$ is as in {\em Figure \ref{DualTableauPic}}.
\end{Example}

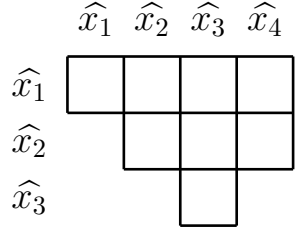
\begin{figure}[h]
\centering
\begin{tikzpicture}[scale=.75]
\node [label = above: {\Large $\widehat{x_1}$}] at (5.5,5) {};
\node [label = above: {\Large $\widehat{x_2}$}] at (6.5,5) {};
\node [label = above: {\Large $\widehat{x_3}$}] at (7.5,5) {};
\node [label = above: {\Large $\widehat{x_4}$}] at (8.5,5) {};
\node [label = left: { \Large $\widehat{x_1}$}] at (5,4.5){};
\node [label = left: {\Large  $\widehat{x_2}$}] at (5,3.5){};
\node [label = left: {\Large  $\widehat{x_3}$}] at (5,2.5){};
\draw [line width=1pt, color=black] (5,5)--(9,5);
\draw[line width = 1pt] (5,5)--(5,4);
\draw[line width=1pt] (5,4)--(9,4);
\draw[line width=1pt](6,5)--(6,3);
\draw[line width=1pt](6,3)--(9,3);
\draw[line width=1pt](7,5)--(7,2);
\draw[line width=1pt](8,5)--(8,2);
\draw[line width=1pt](9,5)--(9,3);
\draw[line width = 1pt](7,2)--(8,2);
\end{tikzpicture}
\caption{Ferrers tableau of $\widehat{I}$: $T_{\widehat I}$}  \label{DualTableauPic}
\end{figure} 

The work of Corso, Nagel, Petrovi\`c, and Yuen describe the special fiber rings of strongly stable ideals generated in degree 2. 

\begin{Proposition}\cite[4.1]{CNPY}
The Krull dimension of the special fiber ring of a strongly stable ideal $I$ generated in degree two  is
$$\dim \mathcal F(I)  = n.$$
\end{Proposition}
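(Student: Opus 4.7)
The plan is to interpret $\mathcal{F}(I)$ as an affine semigroup algebra and to compute its Krull dimension as the rank of an explicit $\Z$-lattice. Writing the minimal monomial generators of $I$ as $f_i = x^{\alpha_i}$ with $\alpha_i \in \N^n$, the surjection $\phi \colon K[T_1, \ldots, T_\nu] \twoheadrightarrow K[It]$ of the previous section identifies $\mathcal{F}(I)$ with the affine semigroup algebra generated by the lifted exponent vectors $(\alpha_i, 1) \in \N^{n+1}$. By the standard dimension formula for toric rings, the Krull dimension of such a semigroup algebra equals the rank of the $\Z$-lattice $L \subseteq \Z^{n+1}$ spanned by these vectors, so the task reduces to showing $\rank L = n$.

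For the upper bound I would exploit the fact that all generators have the same degree two. Each difference $(\alpha_i, 1) - (\alpha_j, 1) = (\alpha_i - \alpha_j, 0)$ lies in the rank-$(n-1)$ sublattice cut out by the two linear equations $v_1 + \cdots + v_n = 0$ and $v_{n+1} = 0$. Since $L$ is generated by the single vector $(\alpha_1, 1)$ together with all such differences, and $(\alpha_1, 1)$ has last coordinate one while every difference has last coordinate zero, I conclude $\rank L \leq 1 + (n-1) = n$.

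For the lower bound I would invoke strong stability. We may assume $I$ involves every variable $x_1, \ldots, x_n$, since otherwise we work in the polynomial subring on the variables that actually occur in $I$. Then for each index $j$ there is some generator $x_k x_j \in I$; applying the exchange property of strongly stable ideals successively yields $x_1 x_j \in I$ for every $j = 1, \ldots, n$, and in particular $x_1^2 \in I$. The corresponding $n$ lifted vectors $(e_1 + e_j, 1) \in \Z^{n+1}$ are linearly independent: subtracting the $j = 1$ vector from the remaining ones replaces them by $(e_j - e_1, 0)$ for $j = 2, \ldots, n$, which together with the unchanged $(2 e_1, 1)$ form an obviously independent set of $n$ vectors in $\Z^{n+1}$. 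Hence $\rank L \geq n$, and combining with the upper bound gives $\dim \mathcal{F}(I) = n$.

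The main step to carry out carefully is the initial identification of $\dim \mathcal{F}(I)$ with $\rank L$; this is the standard dimension formula for toric rings of monomial subalgebras, but it should either be cited or spelled out in the write-up. After that, the proof is elementary lattice arithmetic: the upper bound is a codimension count and the lower bound uses only the exchange property together with the explicit form of the generators $x_1^2, x_1 x_2, \ldots, x_1 x_n$ that strong stability forces into $I$.
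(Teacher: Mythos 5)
Your argument is correct, and it is worth noting that the paper itself offers no proof of this statement: it is quoted verbatim from \cite[4.1]{CNPY}, where the dimension count comes out of the determinantal presentation of the special fiber ring (Theorem 4.2 of that paper, reproduced here), namely $\mathcal F(I)\cong K[\mathbf{T}_\lambda]/I_2(\mathbf{S}_\lambda)$, a ring of $2\times 2$ minors of a symmetric $n\times n$ matrix whose dimension is $n$. Your route is more elementary and self-contained: you identify $\mathcal F(I)=K[It]$ with the affine semigroup ring on the vectors $(\alpha_i,1)\in\N^{n+1}$ and compute $\dim\mathcal F(I)=\rank_{\Z} L$, getting the upper bound $\rank L\le n$ from the fact that all differences $(\alpha_i-\alpha_j,0)$ lie in the rank-$(n-1)$ sublattice $\{v: v_1+\cdots+v_n=0,\ v_{n+1}=0\}$ (equivalently, one could just quote $\ell(I)=\dim\mathcal F(I)\le\dim R$), and the lower bound from the generators $x_1^2,x_1x_2,\ldots,x_1x_n$ forced into $I$ by strong stability, whose lifted exponent vectors are visibly independent. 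Two small points to make explicit in a write-up: first, the statement tacitly assumes every variable divides some generator (in the paper's conventions $\lambda_1=n$), which is exactly what your ``work in the subring on the variables occurring in $I$'' remark is papering over — without it the literal equality $\dim\mathcal F(I)=n$ fails, e.g.\ for $I=(x_1^2)\subseteq K[x_1,x_2]$; second, since $I$ is generated in degree two, every degree-two monomial of $I$ is automatically a minimal generator, so the monomials $x_1x_j$ you produce really do contribute vectors $(e_1+e_j,1)$ to the semigroup generators. With the standard dimension formula for monomial subalgebras cited (e.g.\ Bruns--Herzog or Miller--Sturmfels), your proof is complete, and it has the advantage of not relying on the (much deeper) structural result of \cite{CNPY}, though it of course yields only the dimension and none of the normality, Cohen--Macaulayness, or Koszulness that the determinantal description provides.
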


Since $\mathcal{F}(I)=K[f_1t,\ldots,f_\nu t]$, we define the polynomial ring $K[\mathbf{T}_{\lambda}]:=K[T_{ij}|x_ix_j\in I]$ for a strongly stable ideal $I$ corresponding to specialization of a generalized Ferrers ideal $I_{\lambda-\mu}$ with $\lambda=\{\lambda_1,\ldots,\lambda_m \}$ and $\mu=\{0,1,2,\ldots,m-1\}$. We can think $\mathbf{T}_{\lambda}$ as a $m$ by $n$ matrix with $T_{ij}$ variable as the $ij$ entry when $x_ix_j\in I$ otherwise the entry is $0$. The symmetrized matrix $\mathbf{S}_{\lambda}$ is the $n$ by $n$ matrix obtained by reflecting $\mathbf{T}_{\lambda}$ along the main diagonal \cite{CNPY}.

\begin{Theorem}\cite[4.2]{CNPY}
Let $I \subseteq R[x_1, \ldots, x_n]$ be a strongly stable ideal generated in degree two.  The special fiber ring of $I$ is a determinantal ring arising from the two by two minors of a symmetric matrix. More precisely, there is a graded isomorphism 
$$\mathcal F(I) \cong K[\mathbf{T}_{\lambda}]/I_2(\mathbf{S}_\lambda).$$ Furthermore the ring $\mathcal F(I)$ is a normal Cohen-Macaulay domain that is Koszul. 
\end{Theorem}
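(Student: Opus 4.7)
The plan is to exhibit the isomorphism via the natural surjection
$$\phi: K[\mathbf{T}_\lambda] \longrightarrow \mathcal{F}(I), \qquad T_{ij} \mapsto x_i x_j t,$$
and identify its kernel as $I_2(\mathbf{S}_\lambda)$. First I would check the easy inclusion $I_2(\mathbf{S}_\lambda) \subseteq \ker \phi$. A $2\times 2$ minor of the symmetrized matrix has the form $T_{ij}T_{kl} - T_{ik}T_{jl}$, and under $\phi$ both terms map to the monomial $x_i x_j x_k x_l t^2$, so the difference vanishes. The subtlety here is only that the four entries $T_{ij}, T_{kl}, T_{ik}, T_{jl}$ must all belong to the support of $\mathbf{T}_\lambda$; this is precisely the combinatorial content of strong stability, translated to the Ferrers tableau $T_I$: if $x_ix_j$ and $x_kx_l$ lie in $I$, then after possibly reindexing so that $i \le k \le l$ and $i \le j$, the monomials $x_i x_k$ and $x_j x_l$ (or $x_ix_l$ and $x_jx_k$) also lie in $I$.

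The main work is the reverse inclusion, that $\ker\phi$ is generated by these quadrics. By the presentation from (\ref{relations}), $\ker \phi$ is generated by binomials $T_\beta - T_\alpha$ with $f_\alpha = f_\beta$. I would argue by induction on the common degree $r$: given such a binomial, locate an index where the multiplicities of $\alpha$ and $\beta$ first disagree, then use strong stability to produce a single quadratic exchange of the form $T_{ij}T_{kl} - T_{ik}T_{jl}$ that reduces $T_\alpha$ to a monomial agreeing with $T_\beta$ in one more position. Iterating shows that the binomial lies in $I_2(\mathbf{S}_\lambda)$ modulo lower-complexity relations, completing the induction. The main obstacle is organizing these exchanges: strong stability provides a flexible bound on which swaps produce generators of $I$, and one must verify that the required intermediate generators $x_ix_k, x_jx_l$ are always available so that the quadratic relation used lies in $I_2(\mathbf{S}_\lambda)$ rather than merely in $\ker \phi$.

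Once the isomorphism $\mathcal{F}(I) \cong K[\mathbf{T}_\lambda]/I_2(\mathbf{S}_\lambda)$ is established, the structural properties follow from the theory of symmetric ladder determinantal rings. The support of $\mathbf{T}_\lambda$ is a one-sided ladder inside the symmetric matrix $\mathbf{S}_\lambda$, and the ideal of $2 \times 2$ minors of such a ladder is known (by results of Conca and Herzog--Trung) to define a normal Cohen--Macaulay domain. For the Koszul property, I would exhibit a diagonal or antidiagonal term order under which the $2 \times 2$ minors form a Gröbner basis of $I_2(\mathbf{S}_\lambda)$; since the resulting initial ideal is generated by squarefree quadratic monomials, classical results (Fröberg) give that the quotient is Koszul, and this property is inherited by $\mathcal{F}(I)$ since Koszulness is preserved under flat deformation to the initial ideal. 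The combinatorial ladder shape coming from the partition $\lambda$ is what makes this Gröbner analysis tractable; the main difficulty here is technical rather than conceptual, namely verifying that Buchberger's criterion reduces to checking the $S$-pairs of minors, which for symmetric ladders is standard.
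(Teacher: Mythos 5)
First, note that the paper itself does not prove this statement: it is quoted from \cite[4.2]{CNPY} and used as a black box, so there is no internal proof for your argument to be compared against; your proposal has to stand on its own as a reconstruction of the cited result. As a strategy it is the natural one (present $\mathcal F(I)$ as $K[\mathbf{T}_\lambda]$ modulo the toric ideal, identify that ideal with the $2\times 2$ minors, then invoke symmetric ladder determinantal theory plus a quadratic Gr\"obner basis for the normal/Cohen--Macaulay/Koszul assertions), but it contains a concrete error and, at the decisive step, a genuine gap.

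The error is the combinatorial claim you attribute to strong stability in the ``easy'' inclusion: it is not true that for any two generators $x_ix_j,x_kx_l\in I$ the cross products can be completed inside $I$. For $I=(x_1^2,x_1x_2,x_1x_3,x_2^2)$, which is strongly stable, the pair $x_1x_3$, $x_2^2$ has no completion because $x_2x_3\notin I$. The inclusion $I_2(\mathbf{S}_\lambda)\subseteq\ker\phi$ survives only if $I_2(\mathbf{S}_\lambda)$ is read as the ideal generated by the minors \emph{all four of whose entries lie in the tableau} (the ladder convention); if one instead takes all $2\times 2$ minors of the symmetrized matrix with zeros outside the shape, the statement itself fails, since in the example the minor on rows $\{1,2\}$ and columns $\{1,3\}$ is the monomial $-T_{12}T_{13}$, which does not lie in the toric ideal. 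More seriously, the same phenomenon undercuts the heart of your reverse inclusion: strongly stable degree-two configurations are not closed under the naive exchange/sorting operation (the example above sorts $x_1x_3,x_2^2$ to $x_1x_2,x_2x_3$ with $x_2x_3\notin I$), so your plan of ``find the first index where $\alpha$ and $\beta$ disagree and apply one quadratic exchange'' may call for a minor whose entries are not all in the tableau. You flag exactly this (``one must verify that the required intermediate generators are always available''), but you do not resolve it, and since the generation of $\ker\phi$ by the ladder minors \emph{is} the substance of the theorem, this is a genuine gap rather than a routine verification: the exchanges must be organized using the equality $f_\alpha=f_\beta$ itself, not just strong stability applied pairwise. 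The final paragraph (normality and Cohen--Macaulayness from symmetric ladder determinantal rings, Koszulness from a quadratic Gr\"obner basis) is a legitimate route, contingent on first establishing the presentation.
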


As the tableau corresponding to $\widehat I$ is the same as $I$, from these results and Theorem \ref{FiberIso}, we can describe the special fiber rings of LCM-duals of strongly stable ideals generated in degree two. 
\begin{Corollary}\label{specfibershape}
Let $I$ be a strongly stable ideal generated in degree two of height at least 2 and let $\widehat{I}$ be the LCM-dual of $I$.The special fiber ring of $\widehat{I}$ is a determinantal ring arising from the two by two minors of a symmetric matrix. More precisely, there is a graded isomorphism 
$$\mathcal F(\widehat I) \cong K[\mathbf{T}_{\lambda}]/I_2(\mathbf{S}_\lambda).$$
Furthermore $\mathcal F(\widehat I)$ is a normal Cohen-Macaulay domain that is Koszul, and
$$\dim \mathcal F(\widehat{I}) = n.$$  
\end{Corollary}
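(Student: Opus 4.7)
The plan is essentially to invoke the two immediately preceding results and compose them. First, the hypotheses of Theorem \ref{FiberIso} are explicitly in place: the ideal $I$ is generated in a single degree (namely degree two) and we are given $\height(I)\geq 2$. Therefore Theorem \ref{FiberIso} provides a graded $K$-algebra isomorphism $\mathcal F(I)\cong \mathcal F(\widehat I)$, sending $f_it$ to $\widehat{f_i}t$.

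Second, the quoted theorem of Corso, Nagel, Petrović, and Yuen gives a graded isomorphism $\mathcal F(I)\cong K[\mathbf{T}_\lambda]/I_2(\mathbf{S}_\lambda)$ together with the statement that $\mathcal F(I)$ is a normal Cohen--Macaulay Koszul domain. Composing this with the isomorphism of Theorem \ref{FiberIso} yields
$$\mathcal F(\widehat I)\;\cong\;\mathcal F(I)\;\cong\;K[\mathbf{T}_\lambda]/I_2(\mathbf{S}_\lambda),$$
and since being a normal Cohen--Macaulay Koszul domain is preserved under graded $K$-algebra isomorphism, all of these properties transfer from $\mathcal F(I)$ to $\mathcal F(\widehat I)$. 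The dimension statement $\dim \mathcal F(\widehat I)=n$ follows in the same way from the quoted Proposition 4.1 of \cite{CNPY}, since isomorphic rings have equal Krull dimension.

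There is no real obstacle here; the corollary is a direct combination of Theorem \ref{FiberIso} with the two results cited from \cite{CNPY}. The one small point worth noting in the write-up is that the tableau $\mathbf{T}_\lambda$ and its symmetrization $\mathbf{S}_\lambda$ really are the same for $I$ and $\widehat I$ (as observed just before the statement, where the top and side labels are simply relabeled by $\widehat{x_j}$ and $\widehat{x_i}$), so the same determinantal presentation literally applies, rather than merely an abstractly isomorphic one.
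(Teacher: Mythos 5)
Your proposal is correct and is essentially the paper's own argument: the corollary is obtained by composing the isomorphism $\mathcal F(I)\cong\mathcal F(\widehat I)$ from Theorem \ref{FiberIso} with the determinantal presentation, the normal Cohen--Macaulay Koszul property, and the dimension formula cited from \cite{CNPY}, noting (as the paper does just before the statement) that the tableau for $\widehat I$ coincides with that of $I$. Nothing further is needed.
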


\section{Resolutions of LCM-Duals of Strongly Stable  Ideals of Degree 2}

In this section, we are interested in finding the resolution of $\widehat{I}$, where $I$ is a strongly stable ideal. To find the resolutions of the ideals $\widehat{I}$, we use the theory of cellular resolutions and cell complexes. Proofs may be found in \cite{MS}. We recall some of definitions and theorems that we will use first.
\begin{Definition} A \emph{polyhedral cell complex} $X$ is a finite collection of finite polytopes (in $\mathbb{R}^N$) called the \emph{faces} or \emph{cells} of $X$, satisfying:
\begin{enumerate}
\item If $P \in X$ is a polytope in $X$ and $F$ is a face of $P$ then $F \in X$. 
\item If $P, Q \in X$ then $P \cap Q$ is a face of $P$ and $Q$. 
\end{enumerate}
The maximal faces are called \emph{facets}.
We say that a cell complex $X$ is \emph{labeled} if we can associate to each vertex a vector $\mathbf{a_i} \in \mathbf{N}^N$ (or the monomial $\mathbf{x}^\mathbf{a_i}$.) The label of any face of $X$ is the exponent vector of  $\lcm \{ \mathbf{x}^\mathbf{a_i} | i \in F\}$.
\end{Definition}

Let $F_k(X)$ be the set of faces of $X$ of dimension $k$.  Note that the empty set is the unique dimension $-1$-dimensional face. A cell complex $X$ has an \emph{incidence function} where $\epsilon(Q, P) \in \{1,-1\}$ if $Q$ is a face of $P$. (The sign is determined by whether the orientation of $P$ induces the orientation of $Q$ where the orientation is determined by some ordering of the vertices.)

Let $X$ be a cellular complex of dimension $d$. The \emph{cellular free complex $\mathcal{F}_X$ supported on} $X$ is the complex of $\mathbb{N}^N$ graded $R$-modules
$$\mathcal{F}_X: \quad 0 \leftarrow R \xleftarrow{\partial_0} R^{F_0(X)} \xleftarrow{\partial_1} R^{F_{1}(X)} \xleftarrow{\partial_{2}} R^{F_2(X)} \cdots \xleftarrow{\partial_{d-1}} R^{F_{d-1}(X)} \xleftarrow{\partial_d} R^{F_d(X)} \leftarrow 0$$
where $R^{F_k(X)} := \bigoplus_{P \in F_k(X)} R(-\mathbf{a}_P)$ and the differential $\partial_k$ is defined on basis elements $P$ as
$$\partial(P) = \sum_{Q \text{ a facet of } P} \epsilon (P,Q) \mathbf{x}^{\mathbf{a}_P -\mathbf{a}_Q}Q.$$

We now define the cellular complex that supports the cellular resolution of duals of strongly stable ideals generated in degree two. 

\begin{Definition}\label{strstablecomplex}
Let $\lambda = (\lambda_1, \ldots, \lambda_m)$ be a partition, let $\mu = (0, 1, \ldots, m-1)$ and let $I = \overline I_{\lambda-\mu}$ be the associated strongly stable ideal.  The polyhedral complex $X_\lambda$ is the complex satisfying 
\begin{enumerate}
\item the vertices of $X_\lambda$ are $v_{i,j}$ such that $x_i x_j \in I$ with $i \leq j$, we label the vertex $v_{i,j}$ by $m_I/(x_i x_j)$ ;
\item the edges of $X_\lambda$ are $e_{(i,j),(i+1,j)}$ if $v_{i,j}, v_{i+1, j} \in X_\lambda$ and $e_{(i,j),(i,j+1)}$ if $v_{i,j}, v_{i, j+1} \in X_\lambda$ (the order gives the orientation);
\item the faces of $X_\lambda$ are $s_{(i,j),(i+1,j),(i+1,j+1),(i,j+1)}$ if $v_{i,j}, v_{i+1, j}, v_{i+1, j+1}, v_{i, j+1} \in X_\lambda$ (the order gives the orientation).
\end{enumerate}
\end{Definition}

Notice that the label of edges $e_{(i,j),(i+1,j)} = m_I/x_j$ and the label on edges $e_{(i,j),(i,j+1)}= m_I/x_i$ and labels on faces are $m_I$. Also by definition of $m_I=\lcm(I)$, $\deg(m_I) = m+n$, and hence $\deg(m_I/x_i) = m+n-1$ and $\deg (m_I/x_ix_j) = m+n-2$. 

Let $f$ be the number of faces, $\epsilon$ the number of edges, and $\nu$ the number of vertices in the polyhedral cell complex $X_\lambda$. 
Then the cellular free complex $\mathcal F_{X_\lambda}$ supported on $X_\lambda$ is
$$\mathcal F_{X_\lambda}: 0 \leftarrow R \xleftarrow{\partial_0}  R^\nu(-m-n+2) \xleftarrow{\partial_1} R^\epsilon (-m-n+1) \xleftarrow{\partial_2} R^f(-m-n) \leftarrow 0,$$
where
\begin{eqnarray*}
\partial_2(s_{(i,j),(i+1,j),(i+1,j+1),(i,j+1)})  &=& x_i e_{(i,j),(i,j+1)} + x_{j+1}e_{(i,j+1),(i+1,j+1)} \\
&-& x_{i+1}e_{(i+1,j),(i+1,j+1)}- x_j e_{(i,j),(i+1,j)} \\
\partial_1(e_{(i,j),(i+1,j)}) &=& x_{i}v_{i,j} - x_{i+1}v_{i+1, j}\\
\partial_1(e_{(i,j),(i,j+1)}) &=& x_{j}v_{i,j} - x_{j+1}v_{i, j+1}\\
\partial_0(v_{i,j}) &=& m_I/(x_ix_j).
\end{eqnarray*}

\begin{Example}
Let $\lambda = (4,4,3)$. The polyhedral complex $X_\lambda$ is the complex in {\em Figure \ref{Xlambda}} such that vertices are ordered via $(1,1)<(1,2)<(1,3)<(1,4)<(2,2)<(2, 3)<(2,4)<(3,3)$. The cellular free complex $\mathcal F_{X_\lambda}$ is
$$\mathcal F_{X_\lambda}: 0  \leftarrow R \xleftarrow{\partial_0} R^8(-5) \xleftarrow{\partial_1} R^9 (-6) \xleftarrow{\partial_2} R^2(-7)  \leftarrow 0,$$
where

$$\partial_1 = \begin{bmatrix}
x_1 & 0 & 0 & 0 & 0 & 0 & 0 & 0 &0\\
-x_2 & x_2 & 0 & x_1 & 0 & 0 &0& 0 & 0 \\
0& -x_3 & x_3 & 0 & x_1& 0 & 0& 0 & 0 \\
0 & 0 & -x_4&0 & 0 & x_1& 0 & 0 &0\\
0 & 0 & 0 & -x_2 & 0 & 0 & x_2 & 0&0\\
0& 0 & 0 &0 & -x_2 & 0 & -x_3 & x_3&x_2\\
0 & 0 &0 &0 &0 &-x_2&0 & -x_4 & 0\\
0 & 0 &0 &0 &0 &0 &0&0&-x_3
\end{bmatrix}$$

$$\partial_2 = \begin{bmatrix}0&0\\ x_1 & 0 \\0 & x_1\\ -x_2 & 0\\ x_3 & -x_3\\ 0&x_4\\  -x_2& 0 \\ 0& -x_2 \\0&0  \end{bmatrix}$$

\end{Example}

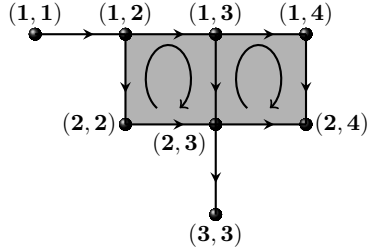
\begin{figure}[h]
\begin{center}
\begin{tikzpicture} [thick, scale=1.2,every node/.style={scale=0.8}]]
\shade [shading=ball, ball color=black]  (1,0) circle (.07) node [above] {$\bf{(1,1)}$};
\shade [shading=ball, ball color=black]  (2,0) circle (.07) node [above] {$\bf{(1,2)}$};
\shade [shading=ball, ball color=black]  (3,0) circle (.07) node [above] {$\bf{(1,3)}$};
\shade [shading=ball, ball color=black]  (4,0) circle (.07) node [above] {$\bf{(1,4)}$};
\shade [shading=ball, ball color=black]  (2,-1) circle (.07) node [left] {$\bf{(2,2)}$};
\shade [shading=ball, ball color=black]  (3,-1) circle (.07) node [below left] {$\bf{(2,3)}$};
\shade [shading=ball, ball color=black]  (4,-1) circle (.07) node [right] {$\bf(2,4)$};
\shade [shading=ball, ball color=black]  (3,-2) circle (.07) node [below] {$\bf(3,3)$};

 \draw[thick, directed] (1,0) -- (2,0);
  \draw[thick, directed] (2,0) -- (3,0);
 \draw[thick, directed] (3,0) -- (4,0);
   \draw[thick, directed] (2,-1) -- (3,-1);
   \draw[thick, directed] (3,-1) -- (4,-1);
   \draw[thick, directed] (2,0) -- (2,-1);
      \draw[thick, directed] (3,0) -- (3,-1);  
         \draw[thick, directed] (4,0)--(4,-1);
         \draw[thick, directed] ((3,-1)--(3,-2);          
         
 \draw (2.3,-0.3)+(-60:.6) [yscale=1.5,<-] arc(-60:240:.25);
  \draw (3.3,-0.3)+(-60:.6) [yscale=1.5,<-] arc(-60:240:.25);     
  
  \path[fill=black,fill opacity=0.3]    (2,0)--(3,0)--(3,-1)--(2,-1)--cycle;
    \path[fill=black,fill opacity=0.3]    (3,0)--(4,0)--(4,-1)--(3,-1)--cycle;

\end{tikzpicture}
\caption{$X_\lambda$ for $\lambda = (4,4,3)$} \label{Xlambda}
\end{center}
\end{figure}

To show that the cellular complex $\mathcal F_X$ is a resolution, we will want to consider a similar problem over vector spaces. We need the following definition. 

\begin{Definition}Let $X$ be a polyhedral cell complex. 
The \emph{reduced chain complex} of $X$ over $K$ is the complex $\overline{C}_\bullet (X, K)$:
$$0 \leftarrow K^{F_{-1} (X)} \xleftarrow{\partial_0} \cdots \leftarrow K^{F_{i-1}(X)} \xleftarrow{\partial_i} K^{F_i(X)} \leftarrow \cdots \xleftarrow{\partial_{d}} K^{F_{d}(X)} \leftarrow 0.$$
\end{Definition}

 We consider a partial order on $\mathbb{N}^N$ defined by $\mathbf{a} \leq \mathbf{b}$ if $\mathbf{b} -\mathbf{a} \in \mathbb{N}^N$. If $\mathbf{b} \in \mathbb{Z}^N$, we define a subcomplex $X_{\leq \mathbf {b}}$, namely the subcomplex of faces whose labels are less or equal to $\mathbf{b}$.

 To determine whether the cellular complex is a resolution we will use the following criteria  of Bayer and Sturmfels. This criteria is useful because it reduces the question of whether a cellular free complex is acyclic to a question of the geometry of the polyhedral cell complex.

\begin{Lemma} \cite{BS}\label{BayerSturmfels}
The complex $\mathcal F_X$ is a cellular resolution if and only if for each $\mathbf{b}$ the complex $X_{\leq \mathbf{b}}$ is acyclic over the field $K$.
\end{Lemma}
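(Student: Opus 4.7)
The plan is to decompose $\mathcal F_X$ into its $\mathbb{N}^N$-graded strands and match each strand with the reduced chain complex of the corresponding subcomplex of $X$. Concretely, I would produce, for every $\mathbf{b} \in \mathbb{N}^N$, a natural isomorphism of complexes of $K$-vector spaces
$$(\mathcal F_X)_\mathbf{b} \;\cong\; \overline{C}_\bullet(X_{\leq \mathbf{b}}, K),$$
and then note that $\mathcal F_X$ is a resolution if and only if each strand $(\mathcal F_X)_\mathbf{b}$ is exact away from the augmentation term, which translates to $X_{\leq \mathbf{b}}$ having vanishing reduced homology over $K$.

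To build the isomorphism I would first examine basis elements. The summand $R(-\mathbf{a}_P) \subseteq R^{F_k(X)}$ has one-dimensional multigraded piece $K \cdot \mathbf{x}^{\mathbf{b} - \mathbf{a}_P}$ in multidegree $\mathbf{b}$ precisely when $\mathbf{a}_P \leq \mathbf{b}$ coordinatewise. Since the label of a face is the lcm of its vertex labels, this inequality holds exactly when every vertex of $P$ has label dividing $\mathbf{x}^\mathbf{b}$, i.e.\ when $P \in F_k(X_{\leq \mathbf{b}})$. Sending the degree-$\mathbf{b}$ basis vector $\mathbf{x}^{\mathbf{b} - \mathbf{a}_P} \otimes P$ to $P \in K^{F_k(X_{\leq \mathbf{b}})}$ identifies the underlying graded vector spaces. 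For the differential, applying
$$\partial(P) = \sum_{Q} \epsilon(P, Q)\, \mathbf{x}^{\mathbf{a}_P - \mathbf{a}_Q}\, Q$$
to the degree-$\mathbf{b}$ basis vector yields $\sum_Q \epsilon(P, Q) \mathbf{x}^{\mathbf{b} - \mathbf{a}_Q} Q$, so after normalizing, the monomial factors cancel and only the signed cellular boundary $\sum_Q \epsilon(P, Q) Q$ remains. Since a facet $Q$ of $P$ automatically satisfies $\mathbf{a}_Q \leq \mathbf{a}_P \leq \mathbf{b}$, the sum is indeed over facets of $P$ in $X_{\leq \mathbf{b}}$, matching the boundary in $\overline{C}_\bullet$. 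The augmentation $R \xleftarrow{\partial_0} R^{F_0(X)}$ corresponds, on the $\mathbf{b}$-strand, to the usual augmentation $K \leftarrow K^{F_0(X_{\leq \mathbf{b}})}$, with the empty face providing the copy of $K$ in position $-1$.

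With the isomorphism in hand, both directions of the biconditional follow. The complex $\mathcal F_X$ is a resolution iff $H_i(\mathcal F_X) = 0$ for all $i \geq 0$, and after decomposing into multigraded strands this is equivalent to $\tilde H_i(X_{\leq \mathbf{b}}, K) = 0$ for every $i \geq 0$ and every $\mathbf{b} \in \mathbb{N}^N$, i.e.\ to acyclicity of each $X_{\leq \mathbf{b}}$ over $K$.

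The main bookkeeping obstacle I expect is the compatibility of the incidence signs $\epsilon(P, Q)$ with the cellular boundary in $\overline{C}_\bullet(X_{\leq \mathbf{b}}, K)$: one needs to check that the chosen orientation on $X$ restricts consistently to each $X_{\leq \mathbf{b}}$, so that the differentials agree on the nose rather than merely up to sign. Once that and the augmentation step are verified, the rest is a strictly degree-by-degree translation between the two frameworks.
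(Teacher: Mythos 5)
Your proposal is correct and is essentially the standard argument from Bayer--Sturmfels (and Miller--Sturmfels), which the paper only cites rather than proves: decompose $\mathcal F_X$ into its multigraded strands, identify the degree-$\mathbf{b}$ strand with $\overline{C}_\bullet(X_{\leq \mathbf{b}}, K)$ via the divisibility criterion $\mathbf{a}_P \leq \mathbf{b}$, and check exactness strand by strand. The only point worth making explicit is the case $X_{\leq \mathbf{b}} = \emptyset$, where the strand reduces to the single copy of $K$ coming from the augmentation term $R$; this is not exact at that spot, but it exactly accounts for the nonzero degree-$\mathbf{b}$ component of the quotient by the ideal generated by the vertex labels, so with the usual convention that the empty complex counts as acyclic the equivalence holds as you state it.
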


To prove each of the $X_{\leq \mathbf{b}}$ are acyclic we require some theorems from graph theory. 
Recall that a directed graph is a graph in which every edge has a direction associated to it. That is, a directed graph is a pair $G = (V, E)$, where $V$ is the set of vertices and $E$ is a set of ordered pairs of vertices in $V$. If $e = (v_i, v_j)$ is an edge in $G$, we say that $v_i$ is the negative end of $e$ and $v_j$ is the positive end of $e$. Figure \ref{Xlambda} gives a representation of a directed graph. The arrows point from the negative end to the positive end. 

We define the incidence matrix which will be useful because it has a connection to $\partial_1$ in the cellular free complex supported on $X_\lambda$. 

\begin{Definition}
Let $G$ be a directed graph with $\nu$ vertices $v_1, \ldots, v_\nu$ and $\epsilon$ directed edges $e_1, \ldots, e_\epsilon$. The \emph{incidence matrix} of $G$, which we will denote by $A(G)$ is the $\nu \times \epsilon$ matrix  given by 
$$a_{ij} = \begin{cases} 1 & \mbox{if $v_i$ is the negative end of $e_j$}\\
-1  & \mbox{if $v_i$ is the positive  end of $e_j$} \\
0 & \mbox{if $v_i$ is not incident with $e_j$ }\end{cases}$$
\end{Definition}

In Figure \ref{Xlambda}, we order the vertices $(1,1), (1,2), (1,3), (1,4), (2,2), (2, 3), (2,4), (3,3)$ and then order the edges from left to right and from top to bottom in the picture. Then we get the following incidence matrix:
$$A(G) = \begin{bmatrix}1 &0 & 0&0&0&0&0&0&0\\
-1 &1 & 0&1&0&0&0&0&0\\
0& -1 & 1&0&1&0&0&0&0\\
0& 0 & -1&0&0&1&0&0&0\\
0& 0 & 0&-1&0&0&1&0&0\\
0& 0 & 0&0&-1&0&-1&1&1\\
0& 0 & 0&0&0&-1&0&-1&0\\
0& 0 & 0&0&0&0&0&0&-1\\
 \end{bmatrix}.$$

A well-known result (see for example \cite[7.10]{TS}) from graph theory computes the rank of the incidence matrix. 
\begin{Proposition}\label{rankincidence}
If $G$ is a directed graph with $\nu$ vertices and $\sigma$ components, the rank of $A(G)$ is $\nu-\sigma$.  In particular, if $G$ is a connected graph, the rank of $A(G)$ is $\nu-1$. 
\end{Proposition}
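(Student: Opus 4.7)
The plan is to compute the dimension of the left null space of $A(G)$ and then invoke rank--nullity (applied to $A(G)^T$, or equivalently to the map $K^\nu \to K^\epsilon$ defined by left multiplication by $v^T$). Since the left null space of an $\nu \times \epsilon$ matrix of rank $r$ has dimension $\nu - r$, it will suffice to show that this null space has dimension exactly $\sigma$.

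The central observation is that each column of $A(G)$ corresponds to some edge $e_j$, and by the definition of the incidence matrix, that column has exactly two nonzero entries: a $+1$ in the row of the negative endpoint and a $-1$ in the row of the positive endpoint. Consequently, if $v = (v_1, \ldots, v_\nu)^T \in K^\nu$, then for the edge $e_j = (v_p, v_q)$ (with $v_p$ negative and $v_q$ positive end) the $j$-th coordinate of $v^T A(G)$ is $v_p - v_q$. Hence $v^T A(G) = 0$ if and only if $v_p = v_q$ for every edge $e_j$ of $G$.

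Next I would translate this condition into a statement about connected components. The relation $v_p = v_q$ for every edge is precisely the statement that $v$ is constant along each edge, and since connected components are, by definition, the equivalence classes of the relation generated by edges, this is equivalent to $v$ being constant on each connected component. The space of such vectors is spanned by the $\sigma$ indicator vectors of the connected components, which are clearly linearly independent. Therefore the left null space has dimension exactly $\sigma$, and so the rank of $A(G)$ is $\nu - \sigma$.

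I do not expect any real obstacle here: the only small care needed is in checking that each column of $A(G)$ has exactly one $+1$ and one $-1$ (which holds for any simple directed graph under the convention given), and in noting that the indicator vectors of distinct connected components are linearly independent because their supports are pairwise disjoint. The special case of a connected graph follows immediately by setting $\sigma = 1$.
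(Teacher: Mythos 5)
Your proof is correct and complete. Note that the paper does not actually prove this proposition: it is quoted as a well-known fact with a citation to Thulasiraman--Swamy \cite[7.10]{TS}, so there is no in-paper argument to compare against. The standard textbook route bounds the rank from above by observing that the rows of each component sum to zero and then exhibits $\nu-\sigma$ independent rows (e.g.\ via a spanning tree of each component); your argument instead computes the left null space directly, showing $v^{T}A(G)=0$ forces $v_p=v_q$ across every edge and hence $v$ is constant on components, so the left kernel is spanned by the $\sigma$ component indicator vectors and rank--nullity gives $\nu-\sigma$. This is arguably cleaner, and it has one further virtue relevant to this paper: the rank is needed over an arbitrary field $K$ (Lemma \ref{BayerSturmfels} asks for acyclicity over $K$), and your kernel computation works verbatim in every characteristic, including characteristic $2$, since the edge condition $v_p-v_q=0$ still reads $v_p=v_q$. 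The only hypotheses you quietly use --- that each column has exactly one $+1$ and one $-1$, i.e.\ no loops --- you flag explicitly, and they hold for the graphs $G_\lambda$ to which the proposition is applied.
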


By construction, the underlying graph of $X_{\lambda}$ in Definition \ref{strstablecomplex}, denoted $G_{\lambda}$, is a planar graph, that is, it is embedded in the plane so that the edges intersect only at the vertices.  The \emph{faces} of a planar graph are the maximal regions of the plane that are disjoint in the embedding. By Euler's Theorem, a planar graph has $f = \epsilon-\nu+1$ bounded faces.  Each face is bounded by a cycle of edges in the graph. Recall that a \emph{cycle} is a sequence of vertices and edges that starts and ends at the same vertex with no repetition of vertices or edges allowed except the starting and ending vertex. We call the cycles bounding the faces of $G_{\lambda}$ the \emph{face cycles}.
Here we define the face cycle matrix for the graph. 

\begin{Definition}
Consider a directed graph $G$  with $m$ labeled edges and $f$ oriented face cycles $s_1, \ldots, s_f$. We have the \emph{face cycle matrix} $C_f = [c_{ij}]_{m \times f}$ is the $m$ by $f$ matrix in which 
$$ c_{ij} =   \begin{cases} 1 & \mbox{if the edge $e_j$ is in the cycle $s_i$ and the orientation}\\
& \qquad \qquad \mbox{ agrees with the cycle orientation}\\
-1  & \mbox{if the edge $e_j$ is in  $s_i$ and its orientation}\\
& \qquad \qquad \mbox{ does not agree with the cycle orientation. } \\
0 & \mbox{if $s_i$ does not contain  $e_j$ }\end{cases}$$
  \end{Definition}

The face cycle matrix corresponds to Figure \ref{Xlambda} is the following:

$$C_f = \begin{bmatrix}0&0\\ 1 & 0 \\0 & 1\\ -1 & 0\\ 1 & -1\\ 0&1\\  -1& 0 \\ 0& -1 \\0&0 \end{bmatrix}.$$
The face cycle matrix is related to the map $\partial_2$ in the cellular free complex $F_{X_\lambda}$. Its rank is computed in the next proposition whose proof can be found in  \cite[7.13]{TS}.
  \begin{Proposition}  \label{FaceCycleRank}
  If $G$ is a connected directed graph with $\nu$ vertices and $\epsilon$ edges, the rank of $C_f$ is $\epsilon-\nu+1$. 
  \end{Proposition}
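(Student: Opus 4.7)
The plan is to proceed by combining Euler's formula, the relation between the face cycle matrix and the incidence matrix, and Proposition \ref{rankincidence}. First I would note that since $G$ is planar and connected, Euler's formula gives exactly $f = \epsilon - \nu + 1$ bounded faces, so $C_f$ is an $\epsilon \times (\epsilon - \nu + 1)$ matrix. Thus $\operatorname{rank}(C_f) \leq \epsilon - \nu + 1$, and the task is to show equality.

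Next I would establish the key identity $A(G)\cdot C_f = 0$. To see this, fix a face cycle $s$ and trace it as a closed directed walk (after possibly reversing orientations according to the sign convention). Each vertex $v$ traversed by $s$ is entered along one edge and left along another. When forming $A(G)\cdot (\text{column of } s)$ and reading the $v$-th row, the incoming and outgoing edges contribute $+1$ and $-1$ respectively (with appropriate signs accounting for the cycle-orientation agreement); these cancel, giving zero. Hence every column of $C_f$ lies in $\ker A(G)$. By Proposition \ref{rankincidence}, $\operatorname{rank} A(G) = \nu - 1$, so by rank-nullity $\dim \ker A(G) = \epsilon - (\nu - 1) = \epsilon - \nu + 1$, which matches the number of columns of $C_f$. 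It therefore suffices to prove that the columns of $C_f$ are linearly independent.

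For linear independence, I would induct on the number of bounded faces. The main obstacle is to exhibit, at each stage, a bounded face $F$ whose boundary contains an edge $e$ lying on no other bounded face cycle. Such an edge exists because the union of all bounded faces is a bounded planar region whose topological outer boundary consists of edges each shared only with the unbounded face; any bounded face touching this outer boundary supplies the desired $e$. Given a relation $\sum_i c_i s_i = 0$, reading the row of $C_f$ indexed by $e$ forces $\pm c_F = 0$, hence $c_F = 0$. Removing $F$ (merging it into the unbounded face) yields a planar graph with one fewer bounded face and the induction applies; the base case of a single face cycle is trivially independent since its column is nonzero.

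Putting these pieces together gives $\operatorname{rank}(C_f) = \epsilon - \nu + 1$. The combinatorial step — producing an outer-boundary edge unique to a single bounded face cycle — is the only nontrivial ingredient; the algebraic steps $A(G)\cdot C_f = 0$ and the dimension count via Proposition \ref{rankincidence} are essentially bookkeeping.
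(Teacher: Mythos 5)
Your proof is correct, but note that the paper does not actually prove Proposition \ref{FaceCycleRank}: it is quoted from \cite[7.13]{TS}, where the standard argument runs through a spanning tree (the fundamental cycle matrix contains an identity block of size $\epsilon-\nu+1$) together with the orthogonality of the cycle and incidence matrices. Your route is genuinely different and self-contained: Euler's formula caps the rank by the number $\epsilon-\nu+1$ of bounded faces, and a peeling induction -- repeatedly locating a bounded face owning an edge shared only with the unbounded face, which forces its coefficient in any relation to vanish -- gives linear independence of the face-cycle columns. This is arguably better matched to what the paper uses, since the textbook statement concerns circuit matrices in general, while here one needs precisely that the bounded face cycles of the plane embedding are independent; your outer-boundary-edge argument supplies exactly that (and it does hold: an edge on the topological boundary of the union of the closed bounded faces borders exactly one bounded face). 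Two small points to tidy. First, the orthogonality $A(G)\,C_f=0$ and the rank--nullity count via Proposition \ref{rankincidence} are redundant once independence is proved directly, because the number of columns is already $\epsilon-\nu+1$ by Euler; they would only be needed if you wanted the upper bound without Euler. Second, in the induction step you should say explicitly that deleting the edges shared by $F$ and the unbounded face leaves the remaining bounded faces and their boundary cycles unchanged (their columns vanish on the deleted rows), and that the resulting graph may be disconnected -- harmless, since connectivity is used only in the Euler count, not in the independence argument; you are also implicitly assuming, as the paper does for $G_\lambda$, that the graph is embedded in the plane with each bounded face bounded by a cycle, which is the setting in which the face cycle matrix is defined.
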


As we mentioned, the maps $\partial_1$ and $\partial_2$  in the complex $\mathcal F_{X_{\lambda}}$ can be described in terms of the incidence matrix and the face matrix. Let $X_\lambda$ be the cellular complex defined in the Definition \ref{strstablecomplex} associated to $\lambda=\{\lambda_1,\ldots,\lambda_m\}$, and $\widehat{I}$ be the LCM dual of $I = \overline {I_{\lambda-\mu}}$, the associated strongly stable ideal. Let $G_{\lambda}$ be the underlying graph of $X_{\lambda}$. 
We have $\partial_{0}:R^{\nu}\rightarrow R$ with $\nu$ is the number of vertices of $G_{\lambda}$ and $\partial_{0}(v_{i,j})=\widehat{x_i}\widehat{x_j}\in \widehat{I}$ where ${v_{i,j}}'s$ is a  basis elements of $R^{\nu}$. For each column of the oriented incidence matrix of $X_\lambda$, it defines the map $\partial_{1}$ from $R^{\epsilon}$ to $R^{\nu}$ with $\partial_{1}(e_{(i,j),(i,k)})=x_jv_{i,j}-x_kv_{i,k}$ and $\partial_{1}(e_{(i,j),(k,j)})=x_iv_{i,j}-x_kv_{k,j}$. For each column of the fundamental cycle matrix $C_f(G_{\lambda})=\left
 [c_{i,(i,j),(i,k)}|c_{i,(i,j),(k,j)}\right]^T$, it defines the map $\partial_{2}$ via  $\partial_{2}(s_i)=\sum \left[c_{i,(i,j),(i,k)}x_{i}e_{(i,j),(i,k)}+c_{i,(i,j),(k,j)}x_{j}e_{(i,j),(k,j)}\right]$. 
 We are now ready to prove the main theorem of this section. 

\begin{Theorem}\label{mfr}
The complex $\mathcal F_{X_\lambda}$ provides the minimal free resolution of  $\widehat{I}$. 
\end{Theorem}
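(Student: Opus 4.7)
The plan is to invoke the Bayer--Sturmfels criterion (Lemma \ref{BayerSturmfels}): once we verify that $X_{\leq\mathbf b}$ is acyclic over $K$ for every $\mathbf b\in\mathbb N^n$, we will know that $\mathcal F_{X_\lambda}$ resolves $R/\widehat I$, and since the image of $\partial_0$ is exactly $\widehat I$, this yields a free resolution of $\widehat I$. Minimality will then be automatic, because every nonzero entry of $\partial_0,\partial_1,\partial_2$ is a single variable and hence lies in the maximal ideal $\mathfrak m=(x_1,\dots,x_n)$.

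To analyze $X_{\leq\mathbf b}$, I would track the ``deficit vector'' $\mathbf d$ obtained by comparing $\mathbf b$ coordinatewise to the exponent vector of $m_I$, and observe that a $2$-face tolerates zero deficit, an edge of $X_\lambda$ tolerates one unit of deficit in exactly one coordinate (the missing variable), and a vertex $v_{i,j}$ tolerates the deficit $e_i+e_j$ (or $2e_i$ if $i=j$). This immediately produces three cases. (i) If $\mathbf d\le 0$, every cell is retained and $X_{\leq\mathbf b}=X_\lambda$. (ii) If $\mathbf d=e_k$ for a single index $k$, then no $2$-cell survives and the retained $1$-skeleton is the ``L-shape'' $L_k$ consisting of row $k$ and column $k$ of $T_\lambda$, meeting at $v_{k,k}$. (iii) In all other situations $\mathbf d$ has either a coordinate $\geq 2$ or at least two nonzero coordinates, forcing $X_{\leq\mathbf b}$ to contain at most a single vertex.

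Acyclicity then follows by combining connectedness with the rank computations that were set up precisely for this purpose. Case (iii) is a point or the empty complex, both acyclic. In case (ii) the $1$-skeleton $L_k$ is a tree (connected with no cycles), so Proposition \ref{rankincidence} gives $\tilde H_0=0$ and the absence of cycles kills $\tilde H_1$. Case (i) is the core of the argument: the planar graph $G_\lambda$ is connected by the Ferrers property (rows and columns of $T_\lambda$ are intervals starting at the diagonal), so Proposition \ref{rankincidence} gives $\tilde H_0(X_\lambda)=0$; and because every bounded region of $G_\lambda$ is precisely one of the unit-square face cycles used to define the $2$-cells of $X_\lambda$, Proposition \ref{FaceCycleRank} forces $\mathrm{rank}(\partial_2)=\epsilon-\nu+1=\dim\ker(\partial_1)$, giving $\tilde H_1(X_\lambda)=0$. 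I expect the principal obstacle to be this last geometric assertion—that $X_\lambda$ has no ``larger'' bounded planar region beyond its unit squares, equivalently that $X_\lambda$ is simply connected—which must be extracted from the monotonicity $\lambda_1\geq\lambda_2\geq\cdots\geq\lambda_m$ and the fact that rows and columns of $T_\lambda$ are intervals. Once this is in hand, the rank propositions complete acyclicity in every multidegree, and minimality is read off directly from the differentials.
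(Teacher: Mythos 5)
Your proposal is correct and follows essentially the same route as the paper's proof: the Bayer--Sturmfels criterion (Lemma \ref{BayerSturmfels}), the three-case analysis of $X_{\leq \mathbf{b}}$ (all of $X_\lambda$; an L-shaped tree coming from one column and possibly one row; at most a single vertex), acyclicity via the rank formulas for the incidence matrix and the face cycle matrix (Propositions \ref{rankincidence} and \ref{FaceCycleRank}), and minimality because every nonzero entry of the differentials is a variable. The geometric point you flag---that the bounded regions of the planar graph $G_\lambda$ are exactly the unit squares, so that the $2$-cells account for all $\epsilon-\nu+1$ bounded faces---is likewise left implicit in the paper, and your deficit-vector case analysis is if anything slightly more complete, since the paper only checks multidegrees arising as face labels.
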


\begin{proof}
By Lemma \ref{BayerSturmfels}, to show that $\mathcal F_{X_\lambda}$  is a resolution, we need to show that for each $\mathbf{b}$  of $X_{\lambda}$, the reduced chain complex of $X_{\leq \mathbf{b}}$ is acyclic. By the construction of $X_\lambda$ in Definition \ref{strstablecomplex}, the label of facet is the same namely, $m_I$. In particular, $m_I$, $m_I/x_i$ and $m_I/x_ix_j$ are the only monomials that appear as labels of non-empty sub-complex of $X_\lambda$. The degrees of these monomials are either $\deg(m_I)$, $\deg(m_I)-1$, or $\deg (m_I) -2$.

If the degree of the corresponding monomial is $\deg(m_I)$,  the complex $X_{\leq \mathbf b}$ is all of $X_{\lambda}$ and  the cellular free complex supported on $X_{\lambda}$ is given by $$\mathcal F_{X_\lambda}: 0 \leftarrow R \xleftarrow{\partial_0}  R^{\beta_1}(-\deg (m_I)+2) \xleftarrow{\partial_1} R^{\beta_2} (-\deg (m_I)+1) \xleftarrow {\partial_2} R^{\beta_3}(-\deg (m_I)) \leftarrow 0.$$
Then the reduced chain complex is 
$$ 0 \leftarrow K \leftarrow  K^{\beta_1} \xleftarrow{A(G(I))} K^{\beta_2} \xleftarrow{C_f(G_{\lambda})} K^{\beta_3} \leftarrow 0.$$
Since $C_f(G_{\lambda})$ and $A(G_{\lambda})$ are exactly the matrices that come from the orientation in $X_{\lambda}$. 
Notice that  by the rank-nullity theorem and Proposition \ref{FaceCycleRank} and \ref{rankincidence}, $\dim(\ker(C_f(G_{\lambda}))) = 0$, $\dim(\ker(A(G_{\lambda})) = \epsilon - \rank(A(G_{\lambda})) = \epsilon - \nu +1 = \dim(\im(C_f(G_{\lambda})))$ and therefore the reduced chain complex is acyclic. 

If $\mathbf{b}$ has degree $\deg  m_I -1$ then $\mathbf{b}=m_I/x_i$ for some $i$. The polyhedral cell complex has vertex set $V_i =  \{v_{j,i} | x_jx_i\in I\}\cup\{v_{i,j} | i \leq j \leq \lambda_i\} $. The subcomplex $X_{\leq{\mathbf{b}}}$ comes from one column of the Ferrers tableau and possibly one row of the Ferrers tableau which is a tree.  Let $\nu_i$ be the number of vertices in the graph, then the number of edges is $\nu_i -1$. The cellular free complex supported on $X_{\leq{\mathbf{b}}}$  is $$0 \leftarrow R \xleftarrow{\partial_0} R^{\nu_i}(-\deg (m_I)+2) \xleftarrow{\partial_1}  R^{\nu_i-1}(-\deg (m_I)+1).$$
The reduced chain complex $$0 \leftarrow K \xleftarrow{\partial_0} K^{\nu_i} \xleftarrow{A(G_i)}  K^{\nu_i-1}\leftarrow 0.$$
Note that the rank of $A(G_i) = \nu_i-1$. This complex is acyclic. 

If $\mathbf{b}$ has degree $\deg (m_I) - 2$ then $\mathbf{b}=m_I/x_ix_j$ where $x_ix_j\in I$. The polyhedral cell complex $X_{\leq \mathbf{b}}$ consists of just the vertex corresponding to that monomial, and the free complex is given by 

$$ 0 \leftarrow R \leftarrow R(-\deg (m_I)+2) \leftarrow 0.$$
The reduced complex 
$$0 \leftarrow K \leftarrow K \leftarrow 0$$
 is acyclic. 

Thus the cellular free complex gives a resolution of $R/\widehat{I}$. 
Finally as the matrices $\partial_{0}$, $\partial_{1}$ and $\partial_{2}$ do not contain any units, the resolution is minimal. 
\end{proof}

From the description of the resolution, we have the following corollary. 

\begin{Corollary}\label{reg}
Let $\lambda = (\lambda_1, \ldots, \lambda_m)$ be a partition. Let $\mu = (0, 1, \ldots, m-1)$ and let $I = \overline I_{\lambda-\mu}$ be the associated strongly stable ideal. Suppose $m > 1$. 
\begin{enumerate}
\item The regularity of $R/\widehat{I} = \deg(m_I) -3 =  \lambda_1 + m -3=n+m-3$. 
\item  The projective dimension of $R/\widehat{I} = 3$. 
\item  $R/\widehat{I}$ has a linearly free resolution. 
\end{enumerate}
\end{Corollary}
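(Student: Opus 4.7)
By Theorem~\ref{mfr}, the minimal free resolution of $R/\widehat{I}$ is obtained by prepending $R$ to $\mathcal{F}_{X_\lambda}$:
\[
0 \leftarrow R/\widehat{I} \leftarrow R \leftarrow R^{\nu}(-(m+n-2)) \leftarrow R^{\epsilon}(-(m+n-1)) \leftarrow R^{f}(-(m+n)) \leftarrow 0,
\]
where $\nu$, $\epsilon$, and $f$ count the vertices, edges, and two-dimensional faces of the polyhedral complex $X_\lambda$ from Definition~\ref{strstablecomplex}. Since this resolution is pure, the only possibly nonzero graded Betti numbers of $R/\widehat{I}$ are $\beta_{0,0} = 1$, $\beta_{1,\,m+n-2} = \nu$, $\beta_{2,\,m+n-1} = \epsilon$, and $\beta_{3,\,m+n} = f$. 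My plan is to read all three conclusions off this list.

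Claims (1) and (3) will fall out at once. Each of the three shifted Betti degrees satisfies $j - i = m+n-3$, so $\reg(R/\widehat{I}) = m+n-3$; the displayed equalities in (1) then collapse to the identities $n = \lambda_1$ and $\deg(m_I) = m+n$. For (3), the shifts in homological positions $1,2,3$ are $m+n-2$, $m+n-1$, $m+n$, forming an arithmetic progression with common difference one, which (together with $\widehat{I}$ being generated in the single degree $m+n-2$) is exactly what is meant by a linear free resolution for $\widehat{I}$.

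The only step requiring a small geometric input is (2): I need to know $f \geq 1$, so that the resolution really has length three. Using the hypothesis $m > 1$ together with Definition~\ref{strstablecomplex}, I will exhibit four vertices $v_{i,j}$, $v_{i+1,j}$, $v_{i+1,j+1}$, $v_{i,j+1}$ of $X_\lambda$ in the upper-left of the tableau $T_{\lambda-\mu}$, which gives a square face and hence $f \geq 1$. Once this is in hand, $\pd(R/\widehat{I})$ equals the largest homological index carrying a nonzero Betti number, namely $3$. The verification of the existence of this square is essentially the only real obstacle in the proof; everything else is bookkeeping on the shifts already delivered by Theorem~\ref{mfr}.
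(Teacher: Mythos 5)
Your overall route is the same as the paper's: the paper gives no separate argument for this corollary, but simply reads it off the shifts in the minimal free resolution furnished by Theorem~\ref{mfr}, exactly as you propose. Your treatment of (1) and (3) is fine: when $m>1$ the complex $X_\lambda$ has at least one edge, so nonzero Betti numbers occur in homological degrees $1$ and $2$ with shifts $m+n-2$ and $m+n-1$, giving $\reg(R/\widehat I)=m+n-3$ and linearity of the resolution of $\widehat I$.

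The genuine gap is the step you yourself flag as the only real obstacle: the claim that $m>1$ forces a two-dimensional face of $X_\lambda$. Since the vertices of $X_\lambda$ are only the $v_{i,j}$ with $i\le j$ and $x_ix_j\in I$, the ``upper-left'' square you intend to exhibit must be $s_{(1,2),(2,2),(2,3),(1,3)}$ (there is no $v_{2,1}$), and its existence requires $x_2x_3\in I$, i.e.\ $\lambda_2\ge 3$. This can fail with $m>1$: take $\lambda=(\lambda_1,2)$, so $I=(x_1^2,x_1x_2,\dots,x_1x_{\lambda_1},x_2^2)$. Then the vertex set is $\{v_{1,1},\dots,v_{1,\lambda_1},v_{2,2}\}$, no square exists, and $f=0$ --- consistent with $\beta_3=\sum_{i=2}^m(\lambda_i-i)=0$ in Proposition~\ref{betti}. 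In the extreme case $\lambda=(2,2)$ one has $\widehat I=I=(x_1,x_2)^2$ and $\pd(R/\widehat I)=2$, so your geometric claim is not merely unproved but false under the hypothesis $m>1$ alone, and in fact part (2) of the corollary itself fails for this family. The square does exist whenever $\lambda_2\ge 3$, which is automatic once $m\ge 3$ (since $\lambda_2\ge\lambda_m\ge m$); so to complete your argument you must either add the hypothesis $\lambda_2\ge 3$ (equivalently exclude $\lambda=(\lambda_1,2)$) or treat that boundary case separately, where the projective dimension drops to $2$.
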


In the next proposition,we give explicit formulas for the Betti numbers in this complex using a basic counting argument. In particular, we note that the Betti numbers do not depend on the configuration, but only on the number of generators, the height, and the number of variables. 
\begin{Proposition}\label{betti}
The Betti numbers of $R/\widehat{I}$ are given by
\begin{eqnarray*}
\beta_1 &=& \lambda_1+ \cdots + \lambda_m - {m \choose 2},\\
\beta_2 &=& \lambda_1 + 2(\lambda_2 + \cdots + \lambda_m) - m^2,\\
\beta_3 &=& \lambda_2 + \cdots + \lambda_m - {{m+1} \choose 2} +1. 
\end{eqnarray*}

Since $\beta_1 =\mu(I)$ and $\lambda_1 =\dim(R) = n$, we can rewrite these formulas as
\begin{eqnarray*}
\beta_1 &=& \mu(I)\\
\beta_2 &=& 2 \mu(I) -g -n\\
\beta_3 &=& \mu(I) -g-n+1
\end{eqnarray*}
\end{Proposition}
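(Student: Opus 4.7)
The plan is to read the Betti numbers directly off the minimal cellular resolution $\mathcal{F}_{X_\lambda}$ furnished by Theorem \ref{mfr}. Because that resolution has free modules whose ranks are, in order, $1$, the number $\nu$ of vertices, the number $\epsilon$ of edges, and the number $f$ of $2$-faces of $X_\lambda$, one immediately gets $\beta_1(R/\widehat{I}) = \nu$, $\beta_2(R/\widehat{I}) = \epsilon$, and $\beta_3(R/\widehat{I}) = f$. So the whole statement reduces to three combinatorial counts on the Ferrers-type tableau associated to $\lambda$ and $\mu = (0, 1, \ldots, m-1)$.

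I would then perform the three counts in turn. A vertex $v_{i,j}$ of $X_\lambda$ corresponds to a generator $x_ix_j$ of $I$ with $i \le j$, i.e.\ to a pair satisfying $i \le j \le \lambda_i$ (using $\mu_i = i-1$), giving $\lambda_i - i + 1$ vertices in row $i$; summing over $i$ yields $\nu = \sum_i \lambda_i - \binom{m+1}{2} + m = \sum_i \lambda_i - \binom{m}{2}$, which is exactly $\mu(I)$ and recovers the claimed $\beta_1$. For the edges I would split into horizontal and vertical families: the horizontal edge $e_{(i,j),(i,j+1)}$ is present iff $j \in [i, \lambda_i - 1]$, contributing $\lambda_i - i$ per row, while the vertical edge $e_{(i,j),(i+1,j)}$ is present iff $j \in [i+1, \lambda_{i+1}]$ (using the partition inequality $\lambda_{i+1} \le \lambda_i$), contributing $\lambda_{i+1} - i$ for each $i \in \{1, \ldots, m-1\}$. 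Adding the two totals and using the identity $\binom{m+1}{2} + \binom{m}{2} = m^2$ gives $\epsilon = \lambda_1 + 2(\lambda_2 + \cdots + \lambda_m) - m^2$. For the $2$-faces, the square $s_{(i,j),(i+1,j),(i+1,j+1),(i,j+1)}$ is present iff all four of its corners are vertices, which collapses to the condition $j \in [i+1, \lambda_{i+1} - 1]$, yielding $\lambda_{i+1} - i - 1$ faces for each $i \in \{1, \ldots, m-1\}$ and a total of $f = \sum_{i=2}^m \lambda_i - \binom{m+1}{2} + 1$.

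The second block of formulas then follows by a one-line substitution: $n = \lambda_1 = \dim R$, and since every $x_i^2$ with $i \le m$ lies in $I$ while every generator of $I$ sits in $(x_1, \ldots, x_m)$, we have $\sqrt{I} = (x_1, \ldots, x_m)$ and so $g = \height(I) = m$. With these identifications, $2\mu(I) - g - n = 2\sum_i \lambda_i - m(m-1) - m - \lambda_1 = \lambda_1 + 2(\lambda_2 + \cdots + \lambda_m) - m^2 = \beta_2$, and $\mu(I) - g - n + 1 = \sum_{i=2}^m \lambda_i - \binom{m}{2} - m + 1 = \sum_{i=2}^m \lambda_i - \binom{m+1}{2} + 1 = \beta_3$.

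The only place demanding any care is the bookkeeping of the index ranges for $j$ under the partition inequalities $\lambda_{i+1} \le \lambda_i$ (ensuring that vertical edges and square faces at the boundary of the Ferrers shape are neither missed nor double counted). There is no genuine mathematical obstacle beyond this routine index discipline, so the proof is essentially a disciplined summation once the three cell counts are pinned down correctly.
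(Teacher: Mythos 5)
Your proposal is correct and follows essentially the same route as the paper: invoke Theorem \ref{mfr} to identify $\beta_1,\beta_2,\beta_3$ with the numbers of vertices, edges, and $2$-faces of $X_\lambda$, and then count these row by row in the tableau (your per-row counts $\lambda_i-i+1$, $\lambda_i-i$ plus $\lambda_{i+1}-i$, and $\lambda_{i+1}-i-1$ match the paper's). Your explicit verification that $g=\height(I)=m$ for the second block is a small addition the paper leaves implicit, but otherwise the arguments coincide.
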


\begin{proof}
To count $\beta_1$ we count the number of vertices of $G_{\lambda}$ by the proof of Theorem \ref{mfr}. Each row has $\lambda_i-(i-1)$ vertices for $1\leq i \leq m$. Thus the sum of all $m$ rows is 
\begin{eqnarray*}
\beta_1 &=& \sum_{i=1}^m \left (\lambda_i - (i-1) \right) \\
&=& \sum_{i=1}^m \lambda_i - \sum_{i=0}^{m-1} i\\
&=& \sum_{i=1}^m \lambda_i - {m \choose 2}.
\end{eqnarray*}

Notice that $\beta_2$ counts the number of edges of $G_\lambda$. At the $i$-th row of $G_\lambda$, there are $\lambda_i-i$ horizontal edges when $1\leq i \leq m$, and $\lambda_i-i+1$ vertical edges above it when $2\leq i \leq m$. The total number of edges is 
\begin{eqnarray*}
\beta_2 &=& \sum_{i=1}^m (\lambda_i -i) + \sum_{i=2}^m (\lambda_i -i+1)\\
& = & \lambda_1 + 2 (\lambda_2 + \cdots + \lambda_m) - \sum_{i=1}^m i - \sum_{i=2}^m i + \sum_{i=2}^m 1\\
& =&\lambda_1 + 2 (\lambda_2 + \cdots + \lambda_m)  - {m+1 \choose 2} - {m+1 \choose 2} +1 + m-1\\
& =& \lambda_1 + 2 (\lambda_2 + \cdots + \lambda_m)  - m(m+1) + m\\
&=& \lambda_1 + 2(\lambda_2 + \cdots + \lambda_m) - m^2.
\end{eqnarray*}

Finally, we notice that $\beta_3$ counts the number of cycles of $G_\lambda$. For $2\leq i \leq m$, above $i$-th row, there are $\lambda_i-i$ cycles. This gives
\begin{eqnarray*}
\beta_3 &=& \sum_{i=2}^m(\lambda_i-i)\\
&=& \lambda_2 + \cdots + \lambda_m - {{m+1} \choose 2} +1.
\end{eqnarray*}
\end{proof}

\begin{flushleft}
{\bf Acknowledgements}
The authors thank Alberto Corso for sharing  his experiments with us for the inspiration of this work. During the early stage of this work, the authors use the computational software Macaulay 2 \cite{M2} for experiments. The authors thank Claudia Polini and Sonja Mapes for useful discussions and proofreading the manuscript.
\end{flushleft}

\end{document}